\documentclass[11pt,a4paper]{article}
\date{}

\usepackage{amssymb}
\usepackage{latexsym}
\usepackage{amsmath,amssymb}
\usepackage{amsthm,enumerate,verbatim}
\usepackage{amsfonts}
\usepackage{graphicx}
\usepackage{xcolor}
\usepackage{subfigure}
\usepackage{multimedia}
\usepackage[title]{appendix}

\graphicspath{{figures/}}

\newcommand{\R}{{\mathbb{R}}}

\newcommand{\BEE}{\begin{equation*}}
\newcommand{\EEE}{\end{equation*}}
\newcommand{\BE}{\begin{equation}}
\newcommand{\EE}{\end{equation}}

\numberwithin{equation}{section}

\newtheorem{definition}{Definition}[section]
\newtheorem{theorem}{Theorem}[section]
\newtheorem{lemma}{Lemma}[section]
\newtheorem{algorithm}{Algorithm}[section]

\newtheorem{proposition}{Proposition}[section]
\newtheorem{remark}{Remark} [section]

\newcommand{\IN}{{\mathbb{N}}}

\def\RR{\hbox{I\kern-.2em\hbox{R}}}
\title{\bf\Large $\ell_{1\text{-}2}$ Regularization for Sparse Optimization: Consistency and Global Convergence}

\author{Yaohua Hu\thanks{School of Mathematical Sciences, Shenzhen University, Shenzhen 518060, P. R. China (mayhhu@szu.edu.cn).},
\quad\quad Hao Wang\thanks{School of Information Science and Technology, ShanghaiTech University, Shanghai 201210, P. R. China (wanghao1@shanghaitech.edu.cn).},
\quad\quad Xiaoqi Yang\thanks{Department of Applied Mathematics, The Hong Kong Polytechnic University, Kowloon, Hong Kong (mayangxq@polyu.edu.hk).} 
}
\begin{document}

\maketitle

\noindent {\bf Abstract:}\quad The $\ell_{1\text{-}2}$ regularization method has a strong sparsity promoting capability in approaching sparse solutions of linear inverse problems and gained successful applications in various mathematics and applied science fields. This paper aims to investigate the consistency theory and global convergent algorithms for the $\ell_{1\text{-}2}$ regularization problem. In the theoretical aspect, we introduce a notion of restricted eigenvalue condition relative to the $\ell_{1\text{-}2}$ penalty, and employ it to establish an oracle property and a recovery bound for the global solution of the $\ell_{1\text{-}2}$ regularization problem. In the algorithmic aspect, we propose two types of iterative thresholding algorithms with the truncation technique and the continuation technique, respectively, to solve the $\ell_{1\text{-}2}$ regularization problem. Moreover, under the assumption of the well-known restricted isometry property, we establish the convergence of the proposed algorithms to the ground true sparse solution within a tolerance relevant to the noise level and the recovery bound. Preliminary numerical results show that our proposed algorithms can approach the ground true sparse solution and significantly enhance the sparsity recovery capability, compared with the popular sparse optimization algorithms in the literature.

\noindent {\bf Key words:}\quad Sparse optimization, $\ell_{1\text{-}2}$ regularization, consistency theory, iterative thresholding algorithm, global convergence, ground true solution.

\section{Introduction}

Sparse optimization aims to find a sparse solution of an underdetermined linear system:
\begin{equation}\label{eq-LS}
b = Ax + \varepsilon,
\end{equation}
where the sensing matrix $A\in \R^{m\times n}$ with $m\ll n$ is known, the observation $b\in \R^m$ is collected with an unknown noise $\varepsilon\in \R^m$, and the variable $x \in \R^n$ is to be estimated. 
The sparsity of variable $x$ accounts to the number of its nonzero components, denoted by $\|x\|_0$. A popular and practical technique for approaching a sparse solution of \eqref{eq-LS} is the following regularization formulation:
\begin{equation}\label{eq-L0}
\min_{x\in \R^n}\, \|Ax-b\|_2^2+\lambda \|x\|_0,
\end{equation}
where $\lambda>0$ is a regularization parameter providing a tradeoff between data fidelity and sparsity. Unfortunately, due to the nonconvex and combinational natures of the $\ell_0$ norm, it is NP-hard to compute a global/local solution of problem \eqref{eq-L0}; see \cite{Natarajan95}.

To overcome this difficulty in designing numerical algorithms, the following (convex) $\ell_1$ regularization problem has been widely studied:
\[
\min_{x\in \R^n}\; \|Ax-b\|_2^2+\lambda\|x\|_1,
\]
where $\|x\|_1:=\sum_{i=1}^n |x_i|$ is a sparsity promoting norm being as the tightest convex approximation to the $\ell_0$ norm. Benefiting from the convexity property, a great deal of attention has been attracted to explore the theoretical property and develop the numerical algorithms for the $\ell_1$ regularization problem. Its nice statistical property and consistency theory, including the oracle property and recovery bound, have been well studied under several types of regularity conditions on matrix $A$, such as the mutual incoherence property (MIP) \cite{Bunea07}, restricted isometry property (RIP) \cite{CandesTao05}, null space property (NSP) \cite{FoucartRauhut2013} and restricted eigenvalue condition (REC) \cite{Bickel09}. Many exclusive and efficient algorithms have been proposed and developed with convergence guarantee, such as the iterative soft thresholding algorithm (ISTA) \cite{Daubechies04}, proximal gradient algorithm (PGA) \cite{BeckTeboulle09}, alternative direction method of multipliers (ADMM) \cite{YZ11} and block coordinate descent method (BCD) \cite{Wright2015}.
However, the $\ell_1$ regularization method suffers from several frustrations in practical applications, including over-penalizing the large absolute entries of the variable \cite{Fan01} and requiring a large number of measurements when applied to compressive sensing \cite{XuZB12}. 

To overcome these drawbacks in applications of the $\ell_1$ regularization, several nonconvex regularization methods, e.g., the smoothly clipped absolute deviation (SCAD) \cite{Fan01}, minimax concave penalty (MCP) \cite{ZhangMCP2010}, $\ell_p$ penalty with $p\in (0,1)$ \cite{ChenXJ10}, $\ell_{1\text{-}2}$ penalty \cite{YinDL2015} and $\ell_1/\ell_2$ penalty \cite{Esser2013}, have been proposed and developed. It has been shown in the literature \cite{LouJSC2015,YinDL2015} that the contours of the $\ell_{1\text{-}2}$ norm are more close to those of the $\ell_0$ norm than the $\ell_1$ and $\ell_p$ norms, and hence the $\ell_{1\text{-}2}$ regularization problem
\begin{equation}\label{eq-L1-L2-intro}
\min_{x\in \R^n}\, \|Ax-b\|_2^2+\lambda (\|x\|_1- \|x\|_2)
\end{equation}
would likely outperform the $\ell_1$ and $\ell_p$ regularizations in the sense that it admits a significantly stronger sparsity promoting capability and allows to obtain a more sparse solution from fewer linear measurements when sensing matrix $A$ is highly coherent.
The $\ell_{1\text{-}2}$ regularization method has gained successful applications in a wide range of fields, such as point source super-resolution \cite{LouJSC2016}, image restoration \cite{LouSIAM2015} and MRI reconstruction \cite{MaLouSIAM2017}.
Motivated by these significant applications, many efforts have been devoted to the development of theoretical properties and optimization algorithms of the $\ell_{1\text{-}2}$ regularization problem \eqref{eq-L1-L2-intro}. Although the nonconvexity and the non-separability of the $\ell_{1\text{-}2}$ penalty may lead to some difficulties in the theoretical and algorithmic study of the $\ell_{1\text{-}2}$ regularization problem \eqref{eq-L1-L2-intro}, the exact recovery property of the $\ell_{1\text{-}2}$ minimization problem with observation measurements constraint has been established under the assumptions of RIP \cite{YinDL2015} and NSP \cite{GeNSP2018}, respectively.
To the best of our knowledge, there is still no study devoted to investigating the oracle property and consistency theory for the $\ell_{1\text{-}2}$ regularization problem \eqref{eq-L1-L2-intro}.
Many practical algorithms have been developed to solve the $\ell_{1\text{-}2}$ regularization problem \eqref{eq-L1-L2-intro} such as the PGA and ADMM \cite{LouYan2018}, and difference of convex functions algorithm (DCA) \cite{YinDL2015}. However, limited by the nonconvexity of the $\ell_{1\text{-}2}$ regularization, only the convergence to a stationary point of problem \eqref{eq-L1-L2-intro} has been established in the literature, while there is still no theoretical evidence to guarantee the ``global convergence", that is, the convergence to a global minimum or the ground true solution. This is a typical deficiency of nonconvex optimization algorithms.

Recently, under the framework of iterative limited shrinkage thresholding algorithms,  the convergence of the PGA to the approximate global minimum or the ground true solution has been investigated in \cite{HuMP25} for certain nonconvex sparse regularization problems, including the SCAD \cite{Fan01}, MCP \cite{ZhangMCP2010} and $\ell_p$ regularization \cite{ChenXJ10}, under the assumption of RIP \cite{CandesTao05}. However, the framework in \cite{HuMP25} only concentrates on  separable penalties, and cannot be applied to deal with the nonseparable $\ell_{1\text{-}2}$ penalty problem \eqref{eq-L1-L2-intro}.

This paper aims to explore the consistency theory and the global convergence of some optimization algorithms for the $\ell_{1\text{-}2}$ regularization problem \eqref{eq-L1-L2-intro}. In the aspect of consistency theory, we first introduce a notion of restricted eigenvalue condition (REC) relative to the $\ell_{1\text{-}2}$ regularization and investigate its sufficient conditions in terms of classical regularity conditions such as the MIP and RIP; see Proposition \ref{prop-REC-sufficient}. Under the assumption of the REC, we obtain an oracle inequality and a recovery bound of order $\mathcal{O}\left(\lambda^2 (\sqrt{s} + 1)^2\right)$ for any point in a level set of the $\ell_{1\text{-}2}$ regularization problem \eqref{eq-L1-L2-intro} at the ground true solution, including the global optimal solution of problem \eqref{eq-L1-L2-intro}; see Theorem \ref{thm-OI+RB}. In the algorithmic aspect, we introduce a thresholding operator to the $\ell_{1\text{-}2}$ regularization by simplifying the analytical formulation of its proximal mapping \cite{LouYan2018}, which is able to overcome the over-penalization phenomenon of the soft thresholding operator on the components with large absolute magnitudes; see the details in Remark \ref{rem-LTO}. By virtue of the $\ell_{1\text{-}2}$ thresholding operator, we propose one iterative thresholding algorithm with the truncation technique \cite{Blumensath09} (ITAT) and one with the continuation technique \cite{HaleYinZhang08} (ITAC); see Algorithms \ref{alg-ITAT} and \ref{alg-ITAC} respectively. The proposed ITAT and ITAC are of simple formulation and low storage requirement, and thus are extremely efficient for large-scale sparse optimization problems.
More importantly, under the assumption of the RIP, we show that the sequence generated by the ITAT converges to an approximate true sparse solution of \eqref{eq-LS} at a geometric rate within a tolerance relevant to the noise level plus the recovery bound and that the output of the ITAC approaches an approximate true sparse solution of \eqref{eq-LS} within a tolerance proportional to the noise level; see Theorems \ref{thm-Hard} and \ref{thm-ITAC} respectively. Preliminary numerical experiments show that the ITAT and the ITAC have strong sparsity promoting capability and outperform the standard PGA on both accuracy and robustness.

This paper is organized as follows. In Section 2, we establish the consistency theory for the $\ell_{1\text{-}2}$ regularization problem \eqref{eq-L1-L2-intro}. In Section 3, we propose the ITAT and the ITAC for solving problem \eqref{eq-L1-L2-intro} and establish their convergence to an approximate true sparse solution of \eqref{eq-LS} under the assumption of the RIP. In Sections 4, we present preliminary numerical results.

The notations adopted in this paper are standard in the literature. As usual, let lowercase letters $x,y,$ and $z$ denote the vectors, caligraphic letters $\mathcal{I}$, $\mathcal{J}$, and $\mathcal{S}$ denote the index sets. For $x\in \R^n$ and $A\in \R^{m\times n}$, we use $x_i$ and $A_{i}$ to denote the $i$-th component of $x$ and the $i$-th column of $A$, respectively. For $\mathcal{I}\subseteq [n]$, we use $\mathcal{I}^c$ to denote the complement of $\mathcal{I}$, and use $x_{\mathcal{I}}$ and $A_{\mathcal{I}}$ to denote the subvector of $x$ consisting of components $i\in \mathcal{I}$ and the submatrix of $A$ consisting of columns $i\in \mathcal{I}$, respectively.
As usual, we write $[n]:=\{1,\cdots,n\}$ and $x_+:=\max\{x,0\}$, and use $x \odot y := (x_iy_i)_{i=1}^n$ to denote the Hadamard product of vectors $x$ and $y$.
The following relations between the $\ell_1$, $\ell_2$ and $\ell_{\infty}$ norms are well-known
\begin{equation}\label{eq-L1-2}
\|x\|_\infty \le \|x\|_2 \le \|x\|_1 \le \sqrt{n}\|x\|_2 \le n\|x\|_\infty.
\end{equation}

\section{Consistency theory}
Throughout this section, we assume that the observation $b$ is collected from a linear transform $A$ on a ground true $s$-sparse solution $\bar{x}$ with support $\mathcal{S}$:
\begin{equation}\label{eq-true-sol}
b=A\bar{x} \, \mbox{ with } \, \mathcal{S}:=\{i:\bar{x}_{i}\neq 0\} \, \mbox{ and } \, s:=|\mathcal{S}|.
\end{equation}
To approach the sparse solution $\bar{x}$, we investigate the $\ell_{1\text{-}2}$ regularization problem:
\begin{equation}\label{eq-L1-L2}
\min_{x\in \R^n} \,\, \frac12\|Ax-b\|_2^2+\lambda (\|x\|_1- \|x\|_2).
\end{equation}

This section is devoted to exploring the consistency theory of the $\ell_{1\text{-}2}$ regularization problem \eqref{eq-L1-L2}, including the oracle property and the recovery bound. To this end, we first introduce a notion of regularity condition and discuss its property.

\subsection{Regularity conditions}

In the scenario of sparse optimization, certain regularity conditions on the sensing matrix $A$ are required to guarantee the consistency theory of sparse regularization problems; see \cite{Bickel09,Bunea07,HuJMLR17,Meinshausen09} and references therein. For example, the restricted eigenvalue condition (REC) 
was introduced in \cite{Bickel09} to investigate the consistency theory of the $\ell_1$ regularization problem.

Inspired by the idea of REC in \cite{Bickel09}, we introduce a notion of REC relative to the $\ell_{1\text{-}2}$ penalty, so as to guarantee the consistency theory of the $\ell_{1\text{-}2}$ regularization problem \eqref{eq-L1-L2}.
To proceed, let $\bar{x}$ and $x^*$ be the ground true $s$-sparse solution defined by \eqref{eq-true-sol} and a global optimal solution of problem \eqref{eq-L1-L2}, respectively.
Note that the residual $\hat{x}:=x^*-\bar{x}$ always satisfies that
\begin{equation*}\label{eq-RECq}
\|\hat{x}_{\mathcal{S}^c}\|_1 \le \|\hat{x}_{\mathcal{S}}\|_1+ \|\hat{x}\|_2;
\end{equation*}
see the proof of Theorem \ref{thm-OI+RB}. Associated to this, we introduce a notion of REC relative to the $\ell_{1\text{-}2}$ penalty as follows.
It is worth mentioning that when the REC relative to $\ell_{1\text{-}2}$ penalty is similar to the $\ell_{1\text{-}2}$ null space property introduced in \cite{ZhangNSP2021}.
Given $1\le s\le t\ll n$, $x\in \R^n$ and $\mathcal{I}\subseteq [n]$, we use $\mathcal{I}(x;t)$ to denote the index set of the first $t$ largest coordinates in the absolute value of $x$ in $\mathcal{I}^c$.

\begin{definition}\label{def-REC}
The matrix $A\in \R^{m\times n}$ is said to satisfies the restricted eigenvalue condition relative to $\ell_{1\text{-}2}$ penalty and $(s,t)$ $(\ell_{1\text{-}2}$-${\rm REC}(s,t) in short)$ if
\[
\phi(s,t):=\min \, \left\{\frac{\|Ax\|_2}{\|x_{\mathcal{J}}\|_2}:|\mathcal{I}|\le s, \|x_{\mathcal{I}^c}\|_1 \le \|x_{\mathcal{I}}\|_1+ \|x\|_2, \mathcal{J}=\mathcal{I}(x;t)\cup \mathcal{I} \right\}>0.
\]
\end{definition}

Several types of regularity conditions have been proposed to investigate the consistency theory and nice statistical property of sparse regularization problems, recalled as follows.


\begin{definition}[RIC \cite{CandesTao05}] \label{def-RIP}
\begin{enumerate}[{\rm (i)}]
 \item The $s$-restricted isometry constant of $A$ is denoted by $\delta_s$ and defined to be the smallest quantity such that, for
any $x\in \R^n$ and $J\subseteq [n]$ with $|J|\le s$,
\begin{equation}\label{eq-RIC}
(1-\delta_s)\|x_J\|_2^2\le \|Ax_J\|_2^2\le (1+\delta_s)\|x_J\|_2^2.
\end{equation}
 \item The $(s,t)$-restricted orthogonality constant of $A$ is denoted by $\theta_{s,t}$ and defined to be the smallest quantity such
that, for any $x\in \R^n$ and $J,T\subseteq [n]$ with $|J|\le s$, $|T|\le t$ and $J\cap T=\emptyset$,
\begin{equation}\label{eq-ROC}
|\langle Ax_J, Ax_{T}\rangle|\le \theta_{s,t}\|x_J\|_2\|x_{T}\|_2.
\end{equation}
\end{enumerate}
\end{definition}

\begin{definition}[MIC \cite{Donoho2001Uncertainty}]
The mutual incoherence constant of $A$ is defined by
\begin{equation}\label{eq-MIP}
\mu:=\sup\{|A_j^\top A_i|: \forall 1\le i \neq j\le n\}.
\end{equation}
\end{definition}

\begin{definition}[SEC \cite{Donoho2006Most}]
The $s$-sparse minimal eigenvalue and $s$-sparse maximal eigenvalue of $A$ are respectively defined by
\begin{equation}\label{sparse-eigen}
\sigma_{\min}(s):=\min_{\|x\|_0\le s} \sqrt{\frac{x^\top A^\top Ax}{x^\top x}},\quad
\sigma_{\max}(s):=\max_{\|x\|_0\le s} \sqrt{\frac{x^\top A^\top Ax}{x^\top x}}.
\end{equation}
\end{definition}

It is natural to investigate the relationships between the $\ell_{1\text{-}2}$-REC and other types of regularity conditions mentioned above. The following proportion provides some sufficient conditions for the $\ell_{1\text{-}2}$-REC in terms of SEC, RIC and MIC, respectively. The proofs are deferred to the Appendix \ref{App-JRFC}.

\begin{proposition}\label{prop-REC-sufficient}
$A\in \R^{m\times n}$ satisfies the $\ell_{1\text{-}2}$-${\rm REC}(s,t)$ provided that one of the following conditions holds:
\begin{enumerate}[{\rm (i)}]
 \item $(\sqrt{t}-1) \sigma_{\min}(s+t)> (\sqrt{s}+1) \sigma_{\max}(t).$
 \item $(\sqrt{s}+1)\theta_{t,s+t} < (\sqrt{t}-1)(1-\delta_{s+t})$.
 \item $(\sqrt{t}-1) \sigma_{\min}(s+t)> 2\theta_{s+t,1} \sqrt{t} (\sqrt{s}+1)$.
 \item $(\sqrt{t}-1) \sigma_{\min}(s+t) > 2\mu \sqrt{t(s+t)} (\sqrt{s}+1)$.
 \item $\|A_i\|_2=1$ for each $i\in [n]$, and $\mu<\frac{\sqrt{t}-1}{(s+t)(\sqrt{t}-1) + 2\sqrt{t(s+t)}(\sqrt{s}+1)}$.
\end{enumerate}
\end{proposition}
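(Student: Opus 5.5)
The plan is the standard shelling argument of Bickel--Ritov--Tsybakov, adapted to the extra $\|x\|_2$ term in the cone. Fix $x$ feasible for the minimization defining $\phi(s,t)$, with $|\mathcal{I}|\le s$, $\|x_{\mathcal{I}^c}\|_1\le \|x_{\mathcal{I}}\|_1+\|x\|_2$ and $\mathcal{J}=\mathcal{I}(x;t)\cup\mathcal{I}$. Partition $\mathcal{J}^c$ into consecutive blocks $T_1,T_2,\dots$ of size $t$, ordered by decreasing magnitude of the entries of $x$ in $\mathcal{I}^c$; also set $T_0:=\mathcal{I}(x;t)$. The usual estimate
\[
\|x_{T_{k+1}}\|_2\le t^{-1/2}\|x_{T_k}\|_1 \quad (k\ge 0)
\]
gives
\[
\sum_{k\ge1}\|x_{T_k}\|_2\le t^{-1/2}\|x_{\mathcal{I}^c}\|_1\le t^{-1/2}\bigl(\sqrt{s}\,\|x_{\mathcal{I}}\|_2+\|x\|_2\bigr).
\]
The term $\|x\|_2$ is then split as $\|x\|_2\le \|x_{\mathcal{J}}\|_2+\sum_{k\ge1}\|x_{T_k}\|_2$. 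Writing $\Sigma:=\sum_{k\ge1}\|x_{T_k}\|_2$, this yields $\Sigma\le t^{-1/2}\bigl((\sqrt s+1)\|x_{\mathcal J}\|_2+\Sigma\bigr)$, and hence the key tail bound
\[
\Sigma\le \frac{\sqrt{s}+1}{\sqrt{t}-1}\,\|x_{\mathcal{J}}\|_2 .
\]
This is exactly where the factors $\sqrt s+1$ and $\sqrt t-1$ in all five conditions come from. It requires $t>1$, which is implicit in every condition, since otherwise the left-hand sides vanish or are negative.

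\textbf{(i)} Use $\|Ax\|_2\ge \|Ax_{\mathcal J}\|_2-\sum_k\|Ax_{T_k}\|_2\ge \sigma_{\min}(s+t)\|x_{\mathcal J}\|_2-\sigma_{\max}(t)\Sigma$. Inserting the tail bound shows that $\|Ax\|_2/\|x_{\mathcal J}\|_2$ is bounded below by $\sigma_{\min}(s+t)-\sigma_{\max}(t)\frac{\sqrt s+1}{\sqrt t-1}>0$. If $x_{\mathcal J}=0$, the tail bound forces $x=0$, so such points are excluded.

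\textbf{(ii)} Expand instead
\[
\|Ax\|_2\|Ax_{\mathcal J}\|_2\ge \langle Ax,Ax_{\mathcal J}\rangle \ge \|Ax_{\mathcal J}\|_2^2-\sum_k|\langle Ax_{T_k},Ax_{\mathcal J}\rangle|.
\]
Bound the first term below by $(1-\delta_{s+t})\|x_{\mathcal J}\|_2^2$ and each cross term by $\theta_{t,s+t}\|x_{T_k}\|_2\|x_{\mathcal J}\|_2$. Combined with $\|Ax_{\mathcal J}\|_2\le\sqrt{1+\delta_{s+t}}\,\|x_{\mathcal J}\|_2$, this gives a positive lower bound under (ii).

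\textbf{(iii)} Proceed as in (ii), but with $\sigma_{\min}(s+t)^2$ in place of $1-\delta_{s+t}$ and the cross terms controlled coordinatewise: $|\langle Ax_{T_k},Ax_{\mathcal J}\rangle|\le \sum_{j\in T_k}|x_j|\,\theta_{s+t,1}\|x_{\mathcal J}\|_2$, which yields $\theta_{s+t,1}\|x_{\mathcal{J}^c}\|_1\|x_{\mathcal J}\|_2$. Then control $\|x_{\mathcal{J}^c}\|_1$ by $\|x_{\mathcal I^c}\|_1\le \sqrt s\|x_{\mathcal J}\|_2+\|x\|_2$ together with the tail bound. This produces constants of the shape $\sqrt t(\sqrt s+1)/(\sqrt t-1)$, up to a factor $2$. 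Getting exactly the factor $2\sqrt t$ as stated is the bookkeeping step I expect to be fiddliest; I would try the bound $\|x\|_2\le \|x_{\mathcal J}\|_2+\Sigma\le \frac{\sqrt t+\sqrt s}{\sqrt t-1}\|x_{\mathcal J}\|_2$ and then crudely bound $\sqrt t+\sqrt s\le 2\sqrt t$, using $s\le t$.

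\textbf{(iv)} Deduce this from (iii) via $\theta_{s+t,1}\le \mu\sqrt{s+t}$. Indeed, $|\langle Ax_J,A_je_j\rangle|\le \mu\|x_J\|_1|x_j|\le\mu\sqrt{|J|}\,\|x_J\|_2|x_j|$, where the columns are taken normalized or the bound is absorbed into the definition of $\mu$.

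\textbf{(v)} With unit columns, the Gershgorin bound gives $\sigma_{\min}(s+t)^2\ge 1-(s+t-1)\mu$. It then remains to check algebraically that the stated bound on $\mu$ implies (iv), for instance via $\sigma_{\min}\ge\sigma_{\min}^2\ge 1-(s+t)\mu$, which is valid when $\sigma_{\min}\le1$. Rearranging $(\sqrt t-1)(1-(s+t)\mu)>2\mu\sqrt{t(s+t)}(\sqrt s+1)$ gives exactly the condition in (v).

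The main obstacle overall is the self-referential $\|x\|_2$ term in the cone condition. It is resolved by absorbing $\Sigma$ to the left-hand side, which is what requires $\sqrt t>1$.
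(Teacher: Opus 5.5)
The gap is in (iii), and you have misdiagnosed it. (The statement (iii), and likewise (iv), is false as written, and the paper's proof has the same error.)

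Your argument correctly uses $\|Ax_{\mathcal J}\|_2^2\ge\sigma_{\min}^2(s+t)\|x_{\mathcal J}\|_2^2$. So what it proves is that the REC holds when $(\sqrt t-1)\,\sigma_{\min}^2(s+t)>\theta_{s+t,1}\sqrt t(\sqrt s+1)$. Pairing with $Ax_{\mathcal J}$ as in (ii) produces no factor $2$ at all; in the paper the $2$ comes from expanding $\|Ax\|_2^2$, not from $\sqrt t+\sqrt s\le 2\sqrt t$. The real mismatch is the exponent: hypothesis (iii) has $\sigma_{\min}(s+t)$ to the first power, which does not imply the squared condition when $\sigma_{\min}(s+t)<1$. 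So the step you defer as bookkeeping cannot be carried out.

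No other argument can close it either. The $\ell_{1\text{-}2}$-REC is invariant under $A\mapsto cA$, whereas $\sigma_{\min}$ scales like $c$ and $\theta_{s+t,1}$, $\mu$ scale like $c^2$. Hence for small $c$, hypotheses (iii) and (iv) only require $\sigma_{\min}(s+t)>0$. Concretely, take $s=1$, $t=4$, and $A\in\mathbb{R}^{5\times 6}$ with $\ker A$ spanned by $x=(2,1,1,1,1,1)^\top$. Every five columns are independent, so $\sigma_{\min}(5)>0$, and $cA$ satisfies (iii) and (iv) for small $c>0$. But with $\mathcal I=\{1\}$ one has $\|x_{\mathcal I^c}\|_1=5=\|x_{\mathcal I}\|_1+\|x\|_2$ and $Ax=0$, so $\phi(1,4)=0$. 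The paper's proof breaks at exactly this point: it asserts $\|Ax_{\mathcal J}\|_2^2\ge\sigma_{\min}(s+t)\|x_{\mathcal J}\|_2^2$, which its own definition of $\sigma_{\min}$ does not support.

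What your argument (and the paper's, once corrected) actually establishes is (iii) and (iv) with $\sigma_{\min}^2(s+t)$ in place of $\sigma_{\min}(s+t)$. Your reduction of (iv) to (iii) via $\theta_{s+t,1}\le\mu\sqrt{s+t}$ is valid for the paper's unnormalized $\mu$, with no column normalization needed. It is equivalent to the paper's direct estimate $|\langle Ax_{\mathcal J},Ax_{\mathcal J^c}\rangle|\le\mu\|x_{\mathcal J}\|_1\|x_{\mathcal J^c}\|_1$, and it inherits the same defect.

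Your proof of (v) passes through the stated (iv) via $\sigma_{\min}\ge\sigma_{\min}^2$, that is, through a false intermediate claim. Statement (v) itself is true, and the fix is simple. Use the Gershgorin bound $\sigma_{\min}^2(s+t)\ge 1-(s+t)\mu$ directly: the hypothesis of (v) gives $(\sqrt t-1)(1-(s+t)\mu)>2\mu\sqrt{t(s+t)}(\sqrt s+1)$, hence the squared version of (iv). This is effectively what the paper does, working with $\|Ax_{\mathcal J}\|_2^2\ge(1-(s+t)\mu)\|x_{\mathcal J}\|_2^2$.

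Parts (i) and (ii) are fine. Your (i) is the paper's argument. Your (ii) inlines the van de Geer--B\"uhlmann lemma the paper cites. Dividing by $\sqrt{1+\delta_{s+t}}$ instead of $\sqrt{1-\delta_{s+t}}$ gives a slightly weaker constant, but positivity holds under the same hypothesis.
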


The following lemma is recalled from \cite{HuJMLR17} and will be useful in establishing the recovery bound for problem \eqref{eq-L1-L2} in Theorem \ref{thm-OI+RB}.
Let ${\rm rank}_i(x)$ denote the rank of $|x_i|$ among $\mathcal{I}^c$ (in a decreasing order), and separate $\mathcal{I}^c$ into a series of disjoint index sets $\{\mathcal{I}_k\}$ with each set consisting of $t$ indexes in an increasing order of ${\rm rank}_i(x)$. That is, $r:=\left\lceil \frac{n-|\mathcal{I}|}{t} \right\rceil$ (where $\lceil u\rceil$ denotes the largest integer no greater than $u$) and
\begin{equation}\label{eq-Jk}
\mathcal{I}_k(x;t):= \left\{i\in\mathcal{I}^c:{\rm rank}_i(x)\in\{kt+1,\dots,(k+1)t\}\right\} \quad \mbox{for} \quad k\in [r].
\end{equation}

\begin{lemma}[{\cite[Lemma 7]{HuJMLR17}}]\label{lem-bound-Nc}
Let $x\in \R^{n}$, $\mathcal{I}\subset [n]$ and $p \ge 1$, and write $\mathcal{J}:=\mathcal{I} \cup \mathcal{I}(x;t)$ and $\mathcal{I}_k:=\mathcal{I}_k(x;t)$. Then it holds that
\[
\|x_{\mathcal{J}^c}\|_{p} \le \sum_{k=1}^r\|x_{\mathcal{I}_{k}}\|_{p} \le t^{\frac{1}{p}-1} \|x_{\mathcal{I}^c}\|_1. 
\]
\end{lemma}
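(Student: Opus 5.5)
The plan is a standard block-decomposition (``shelling'') argument in the spirit of the proof of the $\ell_1$ recovery bound in \cite{Bickel09}. By \eqref{eq-Jk}, the sets $\mathcal{I}_k:=\mathcal{I}_k(x;t)$ partition $\mathcal{I}^c$ into consecutive blocks of $t$ indices, ordered by decreasing magnitude of $|x_i|$; only the last block may contain fewer than $t$ indices. The block with $k=0$, which consists of ranks $1,\dots,t$, is exactly $\mathcal{I}(x;t)$. Since $\mathcal{J}=\mathcal{I}\cup\mathcal{I}(x;t)$, the complement $\mathcal{J}^c$ is the disjoint union of the remaining blocks $\mathcal{I}_k$, $k=1,\dots,r$.

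For the first inequality, I will write $x_{\mathcal{J}^c}=\sum_{k= 1}^r x_{\mathcal{I}_k}$ as a sum of vectors (extended by zero) with disjoint supports. The triangle inequality for the $\ell_p$ norm, valid since $p\ge 1$, then gives $\|x_{\mathcal{J}^c}\|_p\le\sum_{k=1}^r\|x_{\mathcal{I}_k}\|_p$ immediately.

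For the second inequality, the key step is a comparison between adjacent blocks. Fix $k\ge 1$ and $i\in\mathcal{I}_k$. Every index $j\in\mathcal{I}_{k-1}$ has a smaller rank, so $|x_i|\le |x_j|$. Averaging this over the $t$ indices of $\mathcal{I}_{k-1}$, which is a full block because it is not the last one, gives
\[
|x_i|\le \frac{1}{t}\|x_{\mathcal{I}_{k-1}}\|_1 .
\]
Since $|\mathcal{I}_k|\le t$, it follows that
\[
\|x_{\mathcal{I}_k}\|_p\le t^{1/p}\max_{i\in\mathcal{I}_k}|x_i|\le t^{\frac1p-1}\|x_{\mathcal{I}_{k-1}}\|_1 .
\]
Summing over $k=1,\dots,r$ and using that the blocks $\mathcal{I}_0,\dots,\mathcal{I}_{r-1}$ are disjoint subsets of $\mathcal{I}^c$, I obtain
\[
\sum_{k=1}^r\|x_{\mathcal{I}_k}\|_p\le t^{\frac1p-1}\sum_{k=0}^{r-1}\|x_{\mathcal{I}_k}\|_1\le t^{\frac1p-1}\|x_{\mathcal{I}^c}\|_1 .
\]

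The only delicate point is bookkeeping rather than analysis. Blocks must be indexed from $k=0$ so that $\mathcal{I}_0=\mathcal{I}(x;t)$, and the possibly incomplete last block only ever appears as $\mathcal{I}_k$, never as $\mathcal{I}_{k-1}$, so the averaging step always uses a full block of size $t$. Ties in $|x_i|$ are broken by the fixed ranking and do not affect the bound. If $|\mathcal{I}^c|\le t$, then $\mathcal{J}^c=\emptyset$ and both sides are trivially nonnegative.
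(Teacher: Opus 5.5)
Your proof is correct: the triangle inequality over the disjoint blocks gives the first bound. Bounding each entry of a nonempty $\mathcal{I}_k$ by the average of $|x_j|$ over $\mathcal{I}_{k-1}$, which has exactly $t$ elements precisely because $\mathcal{I}_k\neq\emptyset$, then gives the second. The paper gives no proof of this lemma and simply imports it from \cite[Lemma 7]{HuJMLR17}; your block-averaging (shelling) argument is the standard proof of such bounds, so there is no competing route in the paper to compare against.
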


\subsection{Oracle property and recovery bound}
In this subsection, we present the consistency theory of the $\ell_{1\text{-}2}$ regularization problem \eqref{eq-L1-L2}, including its oracle property and recovery bound, under the $\ell_{1\text{-}2}$-REC. For the sake of simplicity, let $h(x):= \|Ax-b\|_2^2+\lambda (\|x\|_1- \|x\|_2)$ denote the objective function of problem \eqref{eq-L1-L2} and define its level set by
\begin{equation}\label{eq-level}
{\rm lev}_h(\bar{x}):=\{x\in \R^n: h(x) \le h(\bar x)\}.
\end{equation}

\begin{theorem}\label{thm-OI+RB}
Let $(\bar{x},\mathcal{S},s)$ be defined in \eqref{eq-true-sol}. Let $x^*\in {\rm lev}_h(\bar{x})$, and suppose that $A$ satisfies the $\ell_{1\text{-}2}${\rm -REC}$(s,s)$.
Then the following statements are true.
\begin{enumerate}[{\rm (i)}]
 \item The oracle inequality holds
  \begin{equation}\label{eq-OI}
  \frac12 \|Ax^*-b\|_2^2+\lambda (\|x^*_{\mathcal{S}^c}\|_1-\|x^*_{\mathcal{S}^c}\|_2) \le \frac{2\lambda^2 (\sqrt{s} + 1)^2}{\phi^2(s,s)}.
  \end{equation}
 \item The recovery bound holds
  \begin{equation}\label{eq-RB}
  \|x^*-\bar{x}\|_2^2\le \left(4+\max\left\{\frac{\sqrt{s} + 1}{\sqrt{s}}, \frac{4}{\sqrt{s} - 2}\right\}^2\right) \frac{\lambda^2 (\sqrt{s} + 1)^2}{\phi^4(s,s)}.
  \end{equation}
\end{enumerate}
\end{theorem}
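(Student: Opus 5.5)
Write $\hat x:=x^*-\bar x$. Since $b=A\bar x$, we have $\|A\bar x-b\|_2=0$. I will read $h$ with the factor $\frac12$ on the fidelity term, matching \eqref{eq-L1-L2}; with the other normalization only the constants change. The level-set condition $h(x^*)\le h(\bar x)$ then gives the basic inequality
\[
\tfrac12\|A\hat x\|_2^2+\lambda(\|x^*\|_1-\|x^*\|_2)\le \lambda(\|\bar x\|_1-\|\bar x\|_2).
\]

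\textbf{Step 1: cone condition.} On the left I split $\|x^*\|_1=\|x^*_{\mathcal S}\|_1+\|x^*_{\mathcal S^c}\|_1$. On the right I use $\|\bar x_{\mathcal S}\|_1-\|x^*_{\mathcal S}\|_1\le \|\hat x_{\mathcal S}\|_1$ and $\|x^*\|_2-\|\bar x\|_2\le\|\hat x\|_2$. Since $\hat x_{\mathcal S^c}=x^*_{\mathcal S^c}$, this yields
\begin{equation*}
\tfrac12\|A\hat x\|_2^2+\lambda\|\hat x_{\mathcal S^c}\|_1\le \lambda\left(\|\hat x_{\mathcal S}\|_1+\|\hat x\|_2\right).
\end{equation*}
Dropping the first term gives $\|\hat x_{\mathcal S^c}\|_1\le\|\hat x_{\mathcal S}\|_1+\|\hat x\|_2$. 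This is exactly the cone in Definition \ref{def-REC} with $\mathcal I=\mathcal S$. Hence with $\mathcal J=\mathcal S\cup\mathcal S(\hat x;s)$ we get $\|A\hat x\|_2\ge\phi(s,s)\|\hat x_{\mathcal J}\|_2$.

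\textbf{Step 2: bound $\|\hat x\|_2$ by $\|\hat x_{\mathcal J}\|_2$.} This is the main obstacle, because the cone contains $\|\hat x\|_2$ on its right-hand side. By Lemma \ref{lem-bound-Nc} with $p=2$, $t=s$, and the cone condition,
\[
\|\hat x_{\mathcal J^c}\|_2\le s^{-1/2}\|\hat x_{\mathcal S^c}\|_1\le s^{-1/2}\left(\sqrt s\|\hat x_{\mathcal J}\|_2+\|\hat x_{\mathcal J}\|_2+\|\hat x_{\mathcal J^c}\|_2\right).
\]
Rearranging gives $(1-s^{-1/2})\|\hat x_{\mathcal J^c}\|_2\le (1+s^{-1/2})\|\hat x_{\mathcal J}\|_2$, which is useful only when $s$ is not tiny. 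I would therefore bound $\|\hat x_{\mathcal J^c}\|_2\le C_s\|\hat x_{\mathcal J}\|_2$, with $C_s$ playing the role of the $\max\{\cdot,\cdot\}$ in \eqref{eq-RB}. I would split cases according to whether $\|\hat x_{\mathcal J^c}\|_2$ is dominated by $\|\hat x_{\mathcal J}\|_2$ (yielding the $\frac{\sqrt s+1}{\sqrt s}$ branch) or not, in which case a cruder absorption gives the $\frac{4}{\sqrt s-2}$ branch. I expect the bookkeeping of this case split to be the delicate part. The net effect is $\|\hat x_{\mathcal S}\|_1+\|\hat x\|_2\le c(\sqrt s+1)\|\hat x_{\mathcal J}\|_2$ for a modest constant $c$.

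\textbf{Step 3: oracle inequality and recovery bound.} For (i), I return to the basic inequality and rewrite its left side as the left side of \eqref{eq-OI} plus the nonnegative term $\lambda(\|x^*_{\mathcal S}\|_1+\|x^*_{\mathcal S^c}\|_2-\|x^*\|_2)$, which is nonnegative by the triangle inequality. The right side is bounded by $\lambda(\|\hat x_{\mathcal S}\|_1+\|\hat x\|_2)\le\lambda(\sqrt s+1)\|\hat x_{\mathcal J}\|_2$ up to the constant from Step 2. This is in turn at most $\lambda(\sqrt s+1)\|A\hat x\|_2/\phi$. Applying $ab\le \frac14 a^2+b^2$-type splitting against $\frac12\|A\hat x\|_2^2$, or directly first solving $\frac12\|A\hat x\|_2^2\le \lambda(\sqrt s+1)\|A\hat x\|_2/\phi$ to get $\|A\hat x\|_2\le 2\lambda(\sqrt s+1)/\phi$ and substituting back, yields the bound $2\lambda^2(\sqrt s+1)^2/\phi^2$. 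For (ii), the same estimate gives $\|\hat x_{\mathcal J}\|_2\le \|A\hat x\|_2/\phi\le 2\lambda(\sqrt s+1)/\phi^2$. Then $\|\hat x\|_2^2=\|\hat x_{\mathcal J}\|_2^2+\|\hat x_{\mathcal J^c}\|_2^2\le(1+C_s^2)\|\hat x_{\mathcal J}\|_2^2$, which after matching constants with $C_s$ produces the factor $4+\max\{\cdot\}^2$ in \eqref{eq-RB}.
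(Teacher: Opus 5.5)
Your skeleton (basic inequality, membership of $\hat x$ in the cone with $\mathcal I=\mathcal S$, the REC, then a quadratic inequality in $\|A\hat x\|_2$) is the paper's, and Step 1 is correct. The gap is in the constants, which are part of the statement.

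\textbf{Part (i).} In Step 3 you bound the right-hand side by $\lambda(\|\hat x_{\mathcal S}\|_1+\|\hat x\|_2)$. Literally, the right side $\lambda(\|\bar x\|_1-\|\bar x\|_2)$ of the basic inequality is not bounded by this; you must first move your nonnegative term across. Keeping the full $\|\hat x\|_2$ is what forces Step 2, and it costs you. You end with $\lambda(\sqrt s+\sqrt{1+C_s^2})\|\hat x_{\mathcal J}\|_2$, i.e.\ your $c>1$. This gives $2c^2\lambda^2(\sqrt s+1)^2/\phi^2(s,s)$ in (i), strictly weaker than \eqref{eq-OI}, and the same factor $c$propagates into $\|A\hat x\|_2$ and into (ii).

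The missing observation is this. Once $\lambda(\|x^*_{\mathcal S}\|_1+\|x^*_{\mathcal S^c}\|_2-\|x^*\|_2)$ is moved to the right, the $\ell_2$ part is $\|x^*\|_2-\|x^*_{\mathcal S^c}\|_2-\|\bar x_{\mathcal S}\|_2\le\|x^*_{\mathcal S}\|_2-\|\bar x_{\mathcal S}\|_2\le\|\hat x_{\mathcal S}\|_2$. So the right side is at most $\lambda(\sqrt s+1)\|\hat x_{\mathcal S}\|_2\le\lambda(\sqrt s+1)\|A\hat x\|_2/\phi(s,s)$, which is \eqref{eq-thm-OI-1b}. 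The cone condition, with its $\|\hat x\|_2$, is needed only to certify that the REC applies. No Step 2 is needed for (i), and you get exactly \eqref{eq-OI} and $\|A\hat x\|_2\le 2\lambda(\sqrt s+1)/\phi(s,s)$.

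\textbf{Part (ii).} A homogeneous bound $\|\hat x_{\mathcal J^c}\|_2\le C_s\|\hat x_{\mathcal J}\|_2$ cannot be matched to \eqref{eq-RB}. Your rearrangement gives $C_s=\frac{\sqrt s+1}{\sqrt s-1}$ for $s\ge 2$. Even with the corrected (i), this yields $\|\hat x\|_2^2\le 4(1+C_s^2)\lambda^2(\sqrt s+1)^2/\phi^4(s,s)$. That is a valid bound of the same order, and actually better than \eqref{eq-RB} for $5\le s\le 8$. But it coincides with \eqref{eq-RB} only at $s=9$ and is weaker for every $s\ge 10$, its factor tending to $8$ rather than $5$. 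Your case split does not produce the branches either: if $\|\hat x_{\mathcal J^c}\|_2\le\|\hat x_{\mathcal J}\|_2$ you get the constant $2$, not $\frac{\sqrt s+1}{\sqrt s}$.

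The ingredient you never use is (i) itself as control on $x^*_{\mathcal S^c}$. With $a:=\|A\hat x\|_2$, \eqref{eq-thm-OI-3} and $\frac{1}{2}a^2+\lambda D\ge a\sqrt{2\lambda D}$ give $D:=\|x^*_{\mathcal S^c}\|_1-\|x^*_{\mathcal S^c}\|_2\le\frac{\lambda(\sqrt s+1)^2}{2\phi^2(s,s)}$. Lemma \ref{lem-bound-Nc} converts this into $\|x^*_{\mathcal N^c}\|_2-s^{-1/2}\|x^*_{\mathcal S^c}\|_2\le s^{-1/2}D$, with $\mathcal N=\mathcal S\cup\mathcal S(x^*;s)$. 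The paper then splits on whether $\|x^*_{\mathcal N^c}\|_2\ge\frac{2}{\sqrt s}\|x^*_{\mathcal S^c}\|_2$, which directly gives the $\frac{\sqrt s+1}{\sqrt s}$ branch. Otherwise $\|x^*_{\mathcal S^c}\|_2\le\|x^*_{\mathcal N^c}\|_2+\|\hat x_{\mathcal N}\|_2$, and absorption (requiring $s>4$) gives $\frac{4}{\sqrt s-2}$. Both branches are additive bounds in units of $\lambda(\sqrt s+1)/\phi^2(s,s)$, not multiples of $\|\hat x_{\mathcal J}\|_2$. That is why your ratio approach cannot reproduce them.
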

\begin{proof}
(i) Let $x^*\in {\rm lev}_h(\bar{x})$. Then it follows from \eqref{eq-level} and $\bar{x}_{\mathcal{S}^c} = 0$ (cf. \eqref{eq-true-sol}) that
\begin{equation}\label{eq-thm-OI-1}
\frac12\|Ax^*-b\|_2^2+\lambda (\|x^*\|_1-\|x^*\|_2) \le \lambda (\|\bar{x}\|_1-\|\bar{x}\|_2) = \lambda (\|\bar{x}_{\mathcal{S}}\|_1-\|\bar{x}_{\mathcal{S}}\|_2).
\end{equation}
Note that
\begin{equation}\label{eq-thm-OI-0}
\|x^*\|_1 = \|x^*_{\mathcal{S}}\|_1 + \|x^*_{\mathcal{S}^c}\|_1 \quad \mbox{and} \quad \|x^*\|_2 \le \|x^*_{\mathcal{S}}\|_2 + \|x^*_{\mathcal{S}^c}\|_2
\end{equation}
(by the subadditivity of $\ell_2$ norm). Then \eqref{eq-thm-OI-1} is reduced to
\begin{equation}\label{eq-thm-OI-1a}
\frac12\|Ax^*-b\|_2^2 + \lambda (\|x^*_{\mathcal{S}^c}\|_1-\|x^*_{\mathcal{S}^c}\|_2)
\le \lambda (\|\bar{x}_{\mathcal{S}}\|_1-\|x^*_{\mathcal{S}}\|_1) + \lambda (\|x^*_{\mathcal{S}}\|_2-\|\bar{x}_{\mathcal{S}}\|_2).
\end{equation}
Note by the subadditivity of $\ell_p$ norm ($p=1,2$) that
\begin{equation}\label{eq-thm-OI-1an}
\|x^*_{\mathcal{S}}\|_2-\|\bar{x}_{\mathcal{S}}\|_2 \le \|x^*_{\mathcal{S}} - \bar{x}_{\mathcal{S}}\|_2 \quad
\mbox{and} \quad
\|\bar{x}_{\mathcal{S}}\|_1-\|x^*_{\mathcal{S}}\|_1 \le \|x^*_{\mathcal{S}} - \bar{x}_{\mathcal{S}}\|_1 \le \sqrt{s} \|x^*_{\mathcal{S}} - \bar{x}_{\mathcal{S}}\|_2
\end{equation}
(due to \eqref{eq-L1-2}).
Then \eqref{eq-thm-OI-1a} is reduced to
\begin{equation}\label{eq-thm-OI-1b}
\frac12\|Ax^*-A\bar{x}\|_2^2 + \lambda (\|x^*_{\mathcal{S}^c}\|_1-\|x^*_{\mathcal{S}^c}\|_2)
\le \lambda (\sqrt{s} + 1) \|x^*_{\mathcal{S}} - \bar{x}_{\mathcal{S}}\|_2.
\end{equation}
On the other hand, noting by \eqref{eq-true-sol} that $\bar{x}_{\mathcal{S}^c} = 0$, we obtain by \eqref{eq-thm-OI-1an} and the subadditivity of $\ell_2$ norm that
\begin{equation*}\label{eq-thm-OI-1c}
\begin{array}{lll}
\|x^*_{\mathcal{S}^c}-\bar{x}_{\mathcal{S}^c}\|_1 - \|x^*_{\mathcal{S}}-\bar{x}_{\mathcal{S}}\|_1 - \|x^*-\bar{x}\|_2 &\le \|x^*_{\mathcal{S}^c}\|_1 - (\|\bar{x}_{\mathcal{S}}\|_1-\|x^*_{\mathcal{S}}\|_1) - (\|x^*\|_2-\|\bar{x}\|_2)\\
&= \|x^*\|_1-\|x^*\|_2 - (\|\bar{x}\|_1-\|\bar{x}\|_2)\\
&\le 0
\end{array}
\end{equation*}
(thanks to \eqref{eq-thm-OI-0} and \eqref{eq-thm-OI-1}). This shows that $x^*-\bar{x}$ falls in the constraint set of the $\ell_{1\text{-}2}${\rm -REC}$(s,s)$. It thus follows from Definition \ref{def-REC} that
\begin{equation*}\label{eq-thm-OI-2}
\|x_{\mathcal{S}}^*-\bar{x}_{\mathcal{S}}\|_2\le \frac1{\phi(s,s)}\|Ax^*-A\bar{x}\|_2.
\end{equation*}
Thus \eqref{eq-thm-OI-1b} is reduced to
\begin{equation}\label{eq-thm-OI-3}
\frac12\|Ax^*-A\bar{x}\|_2^2 + \lambda (\|x^*_{\mathcal{S}^c}\|_1-\|x^*_{\mathcal{S}^c}\|_2)
\le \frac{\lambda (\sqrt{s} + 1)}{\phi(s,s)} \|Ax^*-A\bar{x}\|_2;
\end{equation}
consequently,
\begin{equation}\label{eq-thm-OI-4}
\|Ax^*-A\bar{x}\|_2\le \frac{2\lambda (\sqrt{s} + 1)}{\phi(s,s)}.
\end{equation}
This, together with \eqref{eq-thm-OI-3}, yields the oracle inequality \eqref{eq-OI}.

(ii) Define ${\mathcal{N}}:=\mathcal{S}\cup \mathcal{S}(x^*;s)$. 
The $\ell_{1\text{-}2}${\rm -REC}$(s,s)$ implies that
\begin{equation}\label{eq-thm-RB-1}
\|x_{\mathcal{N}}^*-\bar{x}_{\mathcal{N}}\|_2 \le \frac{\|Ax^*-A\bar{x}\|_2}{\phi(s,s)} \le \frac{2\lambda (\sqrt{s} + 1)}{\phi^2(s,s)}
\end{equation}
(by \eqref{eq-thm-OI-4}). By Lemma \ref{lem-bound-Nc} (with $2$, $s$, $\mathcal{N}$, $\mathcal{S}$ in place of $p$, $t$, $\mathcal{J}$, $\mathcal{I}$), we get that
\begin{equation}\label{eq-thm-RB-2}
\|x_{\mathcal{N}^c}^*\|_2 \le \frac1{\sqrt{s}} \|x_{\mathcal{S}^c}^*\|_1;
\end{equation}
consequently,
\begin{equation}\label{eq-thm-RB-3}
\|x_{\mathcal{N}^c}^*\|_2 - \frac{1}{\sqrt{s}} \|x_{\mathcal{S}^c}^*\|_2 \le \frac1{\sqrt{s}} (\|x_{\mathcal{S}^c}^*\|_1-\|x_{\mathcal{S}^c}^*\|_2).
\end{equation}
Note by \eqref{eq-thm-OI-3} and the Cauchy inequality $a^2+b^2\ge 2ab$ that
\[
 \|Ax^*-A\bar{x}\|_2 \sqrt{2\lambda (\|x^*_{\mathcal{S}^c}\|_1-\|x^*_{\mathcal{S}^c}\|_2)}
\le \frac{\lambda (\sqrt{s} + 1)}{\phi(s,s)} \|Ax^*-A\bar{x}\|_2;
\]
hence
\[
\|x^*_{\mathcal{S}^c}\|_1-\|x^*_{\mathcal{S}^c}\|_2\le \frac{\lambda (\sqrt{s} + 1)^2}{2\phi^2(s,s)}.
\]
This, together with \eqref{eq-thm-RB-3}, implies that
\begin{equation}\label{eq-thm-RB-4}
\|x_{\mathcal{N}^c}^*\|_2 - \frac{1}{\sqrt{s}} \|x_{\mathcal{S}^c}^*\|_2 \le \frac{\lambda (\sqrt{s} + 1)^2}{2 \sqrt{s} \, \phi^2(s,s)}.
\end{equation}
Below we consider the following two cases.\\
Case 1: Suppose that $\|x_{\mathcal{N}^c}^*\|_2\ge \frac{2}{\sqrt{s}} \|x_{\mathcal{S}^c}^*\|_2$. Then \eqref{eq-thm-RB-4} is reduced to
\begin{equation}\label{eq-thm-RB-5}
\|x_{\mathcal{N}^c}^*\|_2 \le \frac{\lambda (\sqrt{s} + 1)^2}{ \sqrt{s} \, \phi^2(s,s)}.
\end{equation}
Case 2: Suppose that $\|x_{\mathcal{N}^c}^*\|_2< \frac{2}{\sqrt{s}} \|x_{\mathcal{S}^c}^*\|_2$. Noting that ${\mathcal{N}}=\mathcal{S}\cup \mathcal{S}(x^*;s)$ being a disjoint decomposition and $\bar{x}_{\mathcal{S}(x^*;s)} = 0$ as $\mathcal{S}(x^*;s) \subseteq \mathcal{S}^c$, we obtain that
\[
\|x_{\mathcal{S}(x^*;s)}^*\|_2 = \|x_{\mathcal{S}(x^*;s)}^* - \bar{x}_{\mathcal{S}(x^*;s)}\|_2 \le \|x_{\mathcal{N}}^*-\bar{x}_{\mathcal{N}}\|_2 \le \frac{2\lambda (\sqrt{s} + 1)}{\phi^2(s,s)}
\]
(due to \eqref{eq-thm-RB-1}), and then we get by the subadditivity of $\ell_2$ norm that
\[
\|x_{\mathcal{S}^c}^*\|_2 \le \|x_{\mathcal{N}^c}^*\|_2 + \|x_{\mathcal{S}(x^*;s)}^*\|_2 \le \|x_{\mathcal{N}^c}^*\|_2 + \frac{2\lambda (\sqrt{s} + 1)}{\phi^2(s,s)}.
\]
This, together with the assumption of Case 2, implies that
\begin{equation}\label{eq-thm-RB-6}
\|x_{\mathcal{N}^c}^*\|_2 \le \frac{4}{\sqrt{s} - 2} \frac{\lambda (\sqrt{s} + 1)}{\phi^2(s,s)}.
\end{equation}
Combining \eqref{eq-thm-RB-5} and \eqref{eq-thm-RB-6} yields that
\begin{equation*}\label{eq-thm-RB-7}
\|x_{\mathcal{N}^c}^*\|_2 \le \max\left\{\frac{\sqrt{s} + 1}{\sqrt{s}}, \frac{4}{\sqrt{s} - 2}\right\} \frac{\lambda (\sqrt{s} + 1)}{\phi^2(s,s)}.
\end{equation*}
This, together with \eqref{eq-thm-RB-1} and $\bar{x}_{\mathcal{N}^c} = 0$, one has that
\[
\|x^*-\bar{x}\|_2^2= \|x_{\mathcal{N}}^*-\bar{x}_{\mathcal{N}}\|_2^2+\|x_{\mathcal{N}^c}^*\|_2^2 \le \left(4+\max\left\{\frac{\sqrt{s} + 1}{\sqrt{s}}, \frac{4}{\sqrt{s} - 2}\right\}^2\right) \frac{\lambda^2 (\sqrt{s} + 1)^2}{\phi^4(s,s)},
\]
which establishes \eqref{eq-RB}, and the proof is complete.
\end{proof}

Theorem \ref{thm-OI+RB} is an important theoretical result in that it provides the oracle inequality \eqref{eq-OI} and the recovery bound \eqref{eq-RB} for the $\ell_{1\text{-}2}$ regularization problem \eqref{eq-L1-L2}. In particular, the oracle inequality \eqref{eq-OI} provides an upper bound on the square error of the linear system plus the violation of the nonzero components of $x^*$, and the recovery bound \eqref{eq-RB} provides an upper bound on the distance from $x^*$ to the ground true solution $\bar x$.

\begin{remark}\label{rem-OI+RB}
One can observe from \eqref{eq-RB} that the global optimal solution $x^*(\lambda)$ of the $\ell_{1\text{-}2}$ regularization problem \eqref{eq-L1-L2} has a recovery bound:
\begin{equation*}
\|x^*(\lambda)-\bar{x}\|_2^2\le \mathcal{O}\left(\lambda^2 (\sqrt{s} + 1)^2\right).
\end{equation*}
Particularly, when $s\ge 16$, it follows from \eqref{eq-RB} that
\begin{equation*}
\|x^*(\lambda)-\bar{x}\|_2^2\le \frac{8}{\phi^4(s,s)} \lambda^2 (\sqrt{s} + 1)^2.
\end{equation*}
\end{remark}

\section{Iterative Thresholding Algorithms}


The type of iterative thresholding algorithms (ITA) is one of the most popular and efficient numerical algorithms, with a simple formulation and a low computational complexity, for solving (convex or nonconvex) sparse optimization problems; see \cite{Blumensath09,Daubechies04,FoucartRauhut2013,HuJMLR17,XuZB12} and references therein. Particularly, the ITA for solving the $\ell_{1\text{-}2}$ regularization problem \eqref{eq-L1-L2} was proposed in \cite{LouYan2018}. However, limited by the nonconvexity of the $\ell_{1\text{-}2}$ regularization, the convergence theory of its ITA is still far from satisfactory: only convergence to a stationary point of problem \eqref{eq-L1-L2} was established in \cite[Theorem 1]{LouYan2018}; while there is still no theoretical evidence to guarantee the convergence to a global minimum or the ground true sparse solution.

To fill this typical gap of nonconvex optimization algorithms, this section aims to propose two novel ITA-type algorithms for solving the $\ell_{1\text{-}2}$ regularization problem \eqref{eq-L1-L2} and investigate their convergence to the ground true sparse solution.

\subsection{$\ell_{1\text{-}2}$ thresholding operator}

Proximal gradient algorithm (PGA) \cite{BeckTeboulle09, HuJMLR17} is one of the most popular and practical algorithms for composite optimization problem
\begin{equation}\label{eq-NCP}
\min_{x\in \R^n}\; f(x) + \varphi(x),
\end{equation}
where $f: \R^n \to \R$ is a differentiable function and $\varphi: \R^n \to \R$ is of some special structures. The iterative formula of the PGA for solving \eqref{eq-NCP} is
\begin{equation}\label{eq-PGA}
x^{k+1}:= {\rm Prox}_{\varphi} (x^k-v \nabla f(x^k)).
\end{equation}
where the proximal mapping ${\rm Prox}_{\varphi}:\R^n \to \R^n$ of $\varphi$ is defined by
\[
{\rm Prox}_{\varphi}(y):= {\rm arg}\min_{x\in \R^n}\, \varphi(x)+\frac{1}{2}\|x-y\|_2^2 \quad\, \mbox{for each } y\in \R^n.
\]

The type of ITAs can be understood as the applications of the PGAs to solve the composite optimization problem \eqref{eq-NCP} with certain regularization term. For example, the iterative soft (resp., hard, half) thresholding algorithm can be understood as the PGA for solving the $\ell_1$ (resp., $\ell_0$, $\ell_{1/2}$) regularization problem; see \cite{Daubechies04,Blumensath09,XuZB12} respectively.

Particularly, when applied to problem \eqref{eq-L1-L2} (with $\frac12\|Ax-b\|^2$ and $\lambda (\|x\|_1- \|x\|_2)$ in place of $f(x)$ and $\varphi(x)$ in \eqref{eq-NCP}), the ITA for solving the $\ell_{1\text{-}2}$ regularization problem \eqref{eq-L1-L2} has the following iterative formula
\begin{equation}\label{eq-PGA-L12}
x^{k+1}:= {\rm Prox}_{v \lambda(\|x\|_1-\|x\|_2)} (x^k-v A^\top (Ax^k-b)).
\end{equation}
It is presented in \cite[Lemma 1]{LouYan2018} that ${\rm Prox}_{\lambda(\|x\|_1- \|x\|_2)}$ has the following analytical formula.
\begin{lemma}\label{lem-prox-L1-L2}
The solution $x^*\in {\rm Prox}_{\lambda(\|x\|_1- \|x\|_2)} (y)$ has the following formulas.
\begin{enumerate}[{\rm (i)}]
 \item If $\|y\|_\infty>\lambda$, then $x^*= \left(1+\frac{\lambda}{\|z\|_2}\right) z $ with $z:=(|y|-\lambda)_+ \odot {\rm sign}(y)$.
 \item If $\|y\|_\infty=\lambda$, then $\|x^*\|_2=\lambda$ with $x_i^*y_i\ge 0$ for each $i\in [n]$ and $x_i^*=0$ whenever $|y_i|<\lambda$. 
 \item If $\|y\|_\infty<\lambda$, then $x^*$ is an $1$-sparse vector satisfying $\|x^*\|_2=\|y\|_\infty$ with $x_i^*y_i\ge 0$ for each $i\in [n]$ and $x_i^*=0$ whenever $|y_i|<\|y\|_\infty$. 
\end{enumerate}
\end{lemma}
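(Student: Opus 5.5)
The approach is to solve the minimization problem defining ${\rm Prox}_{\lambda(\|x\|_1-\|x\|_2)}(y)$, i.e.
\[
\min_{x\in\R^n}\; \lambda\|x\|_1-\lambda\|x\|_2+\tfrac12\|x-y\|_2^2 ,
\]
in two stages. First we reduce to the nonnegative orthant by a sign argument. Then we fix the norm $\|x\|_2=r$ and optimize over $r$.

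\emph{Sign reduction.} Replacing $x_i$ by $|x_i|\,{\rm sign}(y_i)$ leaves $\|x\|_1$ and $\|x\|_2$ unchanged and does not increase $\tfrac12\|x-y\|_2^2$. Hence some minimizer satisfies $x_i y_i\ge 0$. Moreover, if $x_i y_i<0$ then the objective strictly decreases, so every minimizer satisfies $x_iy_i\ge0$. We may therefore assume $y\ge 0$ and $x\ge0$. The objective becomes
\[
\tfrac12\|y\|_2^2+\tfrac12\|x\|_2^2-\lambda\|x\|_2-\langle x,y-\lambda\mathbf 1\rangle .
\]
For fixed $\|x\|_2=r$, this amounts to maximizing $\langle x,w\rangle$ with $w:=y-\lambda\mathbf 1$ over $x\ge0$ with $\|x\|_2=r$.

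\emph{Maximizing the inner product.} If $w_+\neq0$, the maximizer is $x=r\,w_+/\|w_+\|_2$, with value $r\|w_+\|_2$. If $w\le 0$, the value is $r\max_i w_i=r(\|y\|_\infty-\lambda)$. In that case the maximizer is supported on the indices where $w_i=\max_j w_j$, and it is a single such coordinate when $\max_j w_j<0$. When $\max_j w_j=0$, any nonnegative vector supported on $\{i: y_i=\lambda\}$ attains the value.

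\emph{Scalar problem in $r$.} It remains to minimize $g(r)=\tfrac12 r^2-\lambda r-r c$ over $r\ge0$, where $c=\|w_+\|_2$ in case (i) and $c=\|y\|_\infty-\lambda$ otherwise. The minimizer is $r^*=\max\{\lambda+c,0\}$.
\begin{itemize}
\item Case (i), $\|y\|_\infty>\lambda$: here $w_+=z$ (after restoring signs). Then $r^*=\lambda+\|z\|_2$ and
\[
x^*=(\lambda+\|z\|_2)\,\frac{z}{\|z\|_2}=\Bigl(1+\frac{\lambda}{\|z\|_2}\Bigr)z .
\]
\item Case (ii), $\|y\|_\infty=\lambda$: then $c=0$ and $r^*=\lambda$. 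The support lies in $\{i:|y_i|=\lambda\}$, which gives the stated description.
\item Case (iii), $\|y\|_\infty<\lambda$: then $r^*=\lambda+\|y\|_\infty-\lambda=\|y\|_\infty>0$, provided $y\neq0$; the case $y=0$ is trivial with $x^*=0$, which is still $1$-sparse in the loose sense. The maximizer is $1$-sparse, placed at an index achieving $\|y\|_\infty$.
\end{itemize}

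The main obstacle is the careful treatment of the inner maximization when $w$ has no positive part. One must justify that restricting to $x\ge0$ forces the optimum onto the coordinates of largest $|y_i|$. The argument uses $\langle x,w\rangle\le \max_i w_i\,\|x\|_1$ together with $\|x\|_1\ge\|x\|_2$, with equality only for $1$-sparse $x$ when $\max_i w_i<0$. It must also be checked that comparing against $r=0$ does not change the conclusion.
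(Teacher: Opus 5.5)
Your proof is correct, and it takes a genuinely different route from the one behind the lemma. The paper gives no proof of its own; it imports the result from \cite[Lemma 1]{LouYan2018}. That argument works from the optimality condition $0\in x-y+\lambda\,\partial\|x\|_1-\lambda x/\|x\|_2$ (for $x\neq 0$) and then sorts out candidate stationary points case by case: it shows $x^*\neq 0$ when $\|y\|_\infty>\lambda$, and discards non-optimal critical points when $\|y\|_\infty\le\lambda$.

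You instead slice by the level $r=\|x\|_2$. On each sphere the concave term $-\lambda\|x\|_2$ is constant, so after the sign reduction you only need to maximize the linear form $\langle x,w\rangle$ over the nonnegative part of the sphere. Its value is $rc$, with $c=\|w_+\|_2$ if $w_+\neq 0$ and $c=\max_i w_i$ otherwise. What remains is the strictly convex scalar problem $\min_{r\ge 0}\frac12 r^2-(\lambda+c)r$.

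This buys several things:
\begin{itemize}
\item The argument is fully global. You need no subdifferential calculus for a difference of convex functions and never have to exclude spurious stationary points.
\item It gives existence, uniqueness in (i), and the complete solution sets in (ii)--(iii) at once.
\item It extends verbatim to the penalty $\|x\|_1-\alpha\|x\|_2$. The scalar minimizer becomes $\max\{\alpha\lambda+c,0\}$, which recovers the extra case $x^*=0$ for $\|y\|_\infty\le(1-\alpha)\lambda$ treated in \cite{LouYan2018}.
\end{itemize}
In exchange, the stationarity route tells you about all critical points of the subproblem, not only its minimizers.

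Two small repairs are needed.

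\emph{Signs at $y_i=0$.} With the usual convention ${\rm sign}(0)=0$, the replacement $x_i\mapsto |x_i|\,{\rm sign}(y_i)$ does change $\|x\|_1$ and $\|x\|_2$ when $y_i=0\neq x_i$. Use the sign $+1$ on $\{i:y_i=0\}$ instead; the objective is invariant under flipping such $x_i$. Your case analysis then forces every minimizer to vanish on that set, so no solutions are lost.

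\emph{The point $r=0$.} Your worry about comparing against $r=0$ is already settled by your own setup. The only point with $r=0$ is $x=0$, and its objective value is $\frac12\|y\|_2^2+g(0)$. So $g$ describes the reduced objective on all of $[0,\infty)$. Its unique minimizer $r^*=\max\{\lambda+c,0\}$ decides everything, including $x^*=0$ when $y=0$.
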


Associated to the consistency theory (cf. Theorem \ref{thm-OI+RB}), the regularization parameter $\lambda$ should always be set to be quite small in numerical experiments, so as to guarantee the perfect recovery and numerical performance. In this case when $\lambda$ is set to be small, the situation in Lemma \ref{lem-prox-L1-L2} (i) always appears in the process of numerical computation. Hence to simplify the computation of proximal mapping of the $\ell_{1\text{-}2}$ regularization, we define an $\ell_{1\text{-}2}$ thresholding operator
 by
\begin{equation}\label{eq-threshold}
\mathbb{T}_{\lambda}(x) := \mathbb{E}_{\lambda} \circ \mathbb{S}_{\lambda}(x),
\end{equation}
where $\mathbb{S}_{\lambda}$ and $\mathbb{E}_{\lambda}$ are the soft-thresholding operator and the enlargement operator defined respectively by
\begin{equation}\label{eq-soft}
\mathbb{S}_{\lambda}(x) := (|x|-\lambda)_+ \odot {\rm sign}(x) \quad \mbox{and} \quad \mathbb{E}_{\lambda}(x) := \left(1+\frac{\lambda}{\|x\|_2}\right) x.
\end{equation}
It is obtained from Lemma \ref{lem-prox-L1-L2}(i) and \eqref{eq-threshold} that
\begin{equation}\label{eq-ITA=PGA}
\mathbb{T}_{\lambda}(y) = {\rm Prox}_{\lambda(\|x\|_1- \|x\|_2)} (y) \quad \mbox{whenever } \|y\|_\infty>\lambda.
\end{equation}

Below we discuss some useful properties of the $\ell_{1\text{-}2}$ thresholding operator $\mathbb{T}_{\lambda}$.
%

\begin{lemma}\label{lem-LTO}
The $\ell_{1\text{-}2}$ thresholding operator $\mathbb{T}_{\lambda}:\R^n \rightarrow \R^n$ satisfies the following properties:
\begin{enumerate}[{\rm (i)}]
 \item The thresholding property:
  \begin{equation}\label{eq-ThrO}
  (\mathbb{T}_{\lambda}(x))_i = 0 \quad \mbox{whenever } \, |x_i|\le \lambda.
  \end{equation}
 \item The shrinkage property:
  \begin{equation}\label{eq-LTO}
  \|\mathbb{T}_{\lambda}(x)-x \|_\infty \le \lambda.
  \end{equation}
\end{enumerate}
\end{lemma}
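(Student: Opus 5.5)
Both properties follow directly from the definition \eqref{eq-threshold}: write $z:=\mathbb{S}_{\lambda}(x)$, so that $\mathbb{T}_{\lambda}(x)=\left(1+\frac{\lambda}{\|z\|_2}\right)z$. We first dispose of the degenerate case $z=0$, which happens exactly when $\|x\|_\infty\le\lambda$. There the enlargement operator involves $0/0$, and we adopt the natural convention $\mathbb{E}_{\lambda}(0)=0$. Under this convention both claims are immediate: every component of $\mathbb{T}_{\lambda}(x)$ vanishes, which gives \eqref{eq-ThrO}, and $\|\mathbb{T}_{\lambda}(x)-x\|_\infty=\|x\|_\infty\le\lambda$, which gives \eqref{eq-LTO}. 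From now on we assume $\|x\|_\infty>\lambda$, so that $z\neq 0$.

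For (i), the operator $\mathbb{E}_{\lambda}$ merely rescales $z$ by a positive scalar, so it preserves the support of $z$. By the definition of soft thresholding in \eqref{eq-soft}, $z_i=0$ whenever $|x_i|\le\lambda$, and hence $(\mathbb{T}_{\lambda}(x))_i=0$ for all such $i$.

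For (ii), we argue componentwise. If $|x_i|\le\lambda$, then (i) gives $|(\mathbb{T}_{\lambda}(x))_i-x_i|=|x_i|\le\lambda$. If $|x_i|>\lambda$, then $z_i={\rm sign}(x_i)(|x_i|-\lambda)$, and therefore
\[
(\mathbb{T}_{\lambda}(x))_i-x_i={\rm sign}(x_i)\left(|x_i|-\lambda+\frac{\lambda(|x_i|-\lambda)}{\|z\|_2}-|x_i|\right)={\rm sign}(x_i)\,\lambda\left(\frac{|z_i|}{\|z\|_2}-1\right).
\]
The key observation is that $|z_i|\le\|z\|_\infty\le\|z\|_2$ by \eqref{eq-L1-2}, so the factor $\frac{|z_i|}{\|z\|_2}-1$ lies in $[-1,0]$. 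Consequently $|(\mathbb{T}_{\lambda}(x))_i-x_i|\le\lambda$, and taking the maximum over $i$ yields \eqref{eq-LTO}.

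The only real subtlety is the convention at $z=0$; apart from that, the argument rests on the single inequality $|z_i|\le\|z\|_2$.
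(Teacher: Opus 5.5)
Your proof is correct and follows the paper's argument essentially verbatim: (i) holds because $\mathbb{S}_{\lambda}(x)$ vanishes where $|x_i|\le\lambda$, and (ii) is proved componentwise via the identity $(\mathbb{T}_{\lambda}(x))_i-x_i=\lambda\,{\rm sign}(x_i)\bigl(|z_i|/\|z\|_2-1\bigr)$ for $|x_i|>\lambda$. Your explicit convention $\mathbb{E}_{\lambda}(0)=0$ for the case $\|x\|_\infty\le\lambda$ is a small but genuine improvement, since the paper's proof never addresses that $\mathbb{E}_{\lambda}$ is undefined at $0$.
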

\begin{proof}
(i) It directly follows from \eqref{eq-soft} and \eqref{eq-threshold} that
\begin{equation*}
|x_i|\le \lambda \quad \Rightarrow \quad (\mathbb{S}_{\lambda}(x))_i = 0 \quad \Rightarrow \quad (\mathbb{T}_{\lambda}(x))_i = 0.
\end{equation*}

(ii) One can observe from \eqref{eq-ThrO} that
\begin{equation}\label{eq-lem-LTO-1}
|(\mathbb{T}_{\lambda}(x))_i-x_i| \le \lambda \quad \mbox{whenever } \, |x_i|\le \lambda.
\end{equation}
Then it remains to discuss the case when $|x_i|> \lambda$. In this case, it follows from \eqref{eq-soft} and \eqref{eq-threshold} that
\begin{equation}\label{eq-lem-LTO-2}
(\mathbb{S}_{\lambda}(x))_i = (|x_i|-\lambda) \, {\rm sign}(x_i) = x_i - \lambda \, {\rm sign}(x_i),
\end{equation}
and
\[
(\mathbb{T}_{\lambda}(x))_i = (\mathbb{E}_{\lambda} \circ \mathbb{S}_{\lambda}(x))_i = \left(1+\frac{\lambda}{\|\mathbb{S}_{\lambda}(x)\|_2}\right) (\mathbb{S}_{\lambda}(x))_i.
\]
Combining the above two inequalities, we obtain that
\begin{align}
(\mathbb{T}_{\lambda}(x))_i-x_i & = \left(1+\frac{\lambda}{\|\mathbb{S}_{\lambda}(x)\|_2}\right) (\mathbb{S}_{\lambda}(x))_i - \left((\mathbb{S}_{\lambda}(x))_i + \lambda \, {\rm sign}(x_i)\right) \notag \\
& = \frac{\lambda}{\|\mathbb{S}_{\lambda}(x)\|_2} (\mathbb{S}_{\lambda}(x))_i - \lambda \, {\rm sign}(x_i) \label{eq-lem-LTO-3} \\
& = \lambda \, {\rm sign}(x_i) \left( \frac{|(\mathbb{S}_{\lambda}(x))_i|}{\|\mathbb{S}_{\lambda}(x)\|_2} - 1\right) \notag
\end{align}
(due to the fact that $(\mathbb{S}_{\lambda}(x))_i$ and $x_i$ have the same sign). Noting that $1 - \frac{|(\mathbb{S}_{\lambda}(x))_i|}{\|\mathbb{S}_{\lambda}(x)\|_2}  \in [0, 1)$, it follows that $|(\mathbb{T}_{\lambda}(x))_i-x_i| < \lambda$. This, together with \eqref{eq-lem-LTO-1}, shows \eqref{eq-LTO}. 
\end{proof}

\begin{remark}\label{rem-LTO}
{\rm (i)} It is indicated from Lemma \ref{lem-LTO} that the $\ell_{1\text{-}2}$ thresholding operator 
$\mathbb{T}_{\lambda}$ inherits the thresholding and shrinkage properties, that are important for approaching a sparse solution.

{\rm (ii)} The enlargement operator $\mathbb{E}_{\lambda}$ enlarges the absolute values of nonzero components, hence the $\ell_{1\text{-}2}$ thresholding operator $\mathbb{T}_{\lambda}$ is able to overcome the over-penalization phenomenon of the soft thresholding operator $\mathbb{S}_{\lambda}$ on the components with large magnitude. In particular, we have by \eqref{eq-lem-LTO-2} and \eqref{eq-lem-LTO-3} that
\[
0< \frac{x_i-(\mathbb{T}_{\lambda}(x))_i}{x_i-(\mathbb{S}_{\lambda}(x))_i} = 1 - \frac{|(\mathbb{S}_{\lambda}(x))_i|}{\|\mathbb{S}_{\lambda}(x)\|_2} \le 1,
\]
and hence
\[
|(\mathbb{S}_{\lambda}(x))_i| \le |(\mathbb{T}_{\lambda}(x))_i| \le |x_i|.
\]
\end{remark}

Associated with the $\ell_{1\text{-}2}$ thresholding operator $\mathbb{T}_{\lambda}$, the standard ITA for solving the $\ell_{1\text{-}2}$ regularization problem \eqref{eq-L1-L2} has the following iterative form:
\begin{equation}\label{eq-ITA}
x^{k+1}:=\mathbb{T}_{\lambda}(x^k-v A^\top(Ax^k-b)).
\end{equation}
\begin{remark}
It is pointed out by Lemma \ref{lem-prox-L1-L2}(i), also by \eqref{eq-ITA=PGA}, that the thresholding operator $\mathbb{T}_{\lambda}$ equals to the proximal operator of the $\ell_{1\text{-}2}$ regularization whenever $\lambda$ is small. Consequently, when $\lambda$ is set to be small, the ITA \eqref{eq-ITA} is equivalent to the PGA \eqref{eq-PGA-L12} for the $\ell_{1\text{-}2}$ regularization problem \eqref{eq-L1-L2}.
\end{remark}
There is an open gap between the theoretical and algorithmic studies of the $\ell_{1\text{-}2}$ regularization problem \eqref{eq-L1-L2}. In particular, the consistency theory in Theorem \ref{thm-OI+RB} provides a recovery bound for the global minimum of problem \eqref{eq-L1-L2} to the ground true sparse solution; while the convergence theory in \cite[Theorem 1]{LouYan2018} only presents the convergence of the standard ITA \eqref{eq-ITA} to a stationary point of problem \eqref{eq-L1-L2}. However, there is still no theoretical evidence to guarantee the convergence to global minimum or the ground true sparse solution. This is also a typical gap of nonconvex optimization algorithms.

Throughout this paper, we assume that the observation $b$ is collected from a linear transform $A$ on a ground true $s$-sparse solution $\bar{x}$ with support $\mathcal{S}$ and an unknown noise $\varepsilon$:
\begin{equation}\label{eq-true-sol-e}
b=A\bar{x} + \varepsilon \, \mbox{ with } \, \mathcal{S}:=\{i:\bar{x}_{i}\neq 0\} \, \mbox{ and } \, s:=|\mathcal{S}|.
\end{equation}
In the remainder of this section, we aim to fill this gap by providing a positive theoretical guarantee for the sparsity recovery capability of the ITA-type algorithms for solving the $\ell_{1\text{-}2}$ regularization problem \eqref{eq-L1-L2}. In particular, we will propose two types of ITAs by combining with the truncation technique \cite{Blumensath09} and the continuation technique \cite{Yin08}, respectively, and investigate their convergence to the ground true sparse solution $\bar{x}$ of \eqref{eq-LS} under the assumption of the restricted isometry property (RIP) \cite{CandesTao05}.

\begin{remark}
The RIP \cite{CandesTao05} has been widely used for
the convergence analysis of sparse optimization algorithms \cite{Blumensath09, FoucartRauhut2013, NeedellCoSaMP2009, TZhang13}.
Many types of random matrices, including Gaussian, Bernoulli, and partial Fourier matrices, have been shown to satisfy the RIP with exponentially high probability \cite{BlanchardRIP2011}.
\end{remark}

%


We end this subsection with some useful properties of the RIP in the following lemma, which are taken from \cite[Lemma 1]{HuMP25}, also \cite[Proposition 3.1]{NeedellCoSaMP2009} and \cite[Lemmas 6.16 and 6.20]{FoucartRauhut2013}, and will be helpful in convergence analysis of our proposed algorithms.


\begin{lemma}\label{lem-RIP}
Suppose that $A$ satisfies the RIP with $\delta_s<1$. Let $x\in\mathbb{R}^n$, $\varepsilon\in\mathbb{R}^m$, $\mathcal{I}, \mathcal{J}\subseteq [n]$, and $v\ge 0$. Then the following assertions are true.
\begin{itemize}
	\item [\rm{(i)}] If $|\mathcal{I}\cup\operatorname{supp}(x)|\le s$, then $\|((\mathbb{I}-v A^{\top}A)x)_{\mathcal{I}}\|_2\le (|1-v| + v\delta_s)\|x\|_2$.
	\item [\rm{(ii)}] If $|\mathcal{I}|\le s$, then $		\|A^{\top}_{\mathcal{I}} \varepsilon\|_2\le\sqrt{1+\delta_s}\|\varepsilon\|_2$.
  \item [\rm{(iii)}] If $\mathcal{I} \cap \mathcal{J} = \emptyset$ and $|\mathcal{I}\cup \mathcal{J}|\le s$, then $
  \|A_{\mathcal{J}}^\top A_{\mathcal{I}} x_{\mathcal{I}}\|_2 \le \delta_s\|x_{\mathcal{I}}\|_2$.
\end{itemize}
\end{lemma}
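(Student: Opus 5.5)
The plan is to prove the three assertions of Lemma \ref{lem-RIP} separately. Each one reduces to a spectral bound on a small Gram submatrix of $A$ obtained from \eqref{eq-RIC}. Throughout, write $\mathcal{T}$ for an index set with $|\mathcal{T}|\le s$. The basic fact we use is that, by \eqref{eq-RIC}, every eigenvalue of the symmetric matrix $A_{\mathcal{T}}^\top A_{\mathcal{T}}$ lies in $[1-\delta_s,1+\delta_s]$. Consequently $\|A_{\mathcal{T}}^\top A_{\mathcal{T}}\|_2\le 1+\delta_s$ and $\|\mathbb{I}-A_{\mathcal{T}}^\top A_{\mathcal{T}}\|_2\le \delta_s$, where $\|\cdot\|_2$ denotes the spectral norm.

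For (i), set $\mathcal{T}:=\mathcal{I}\cup\operatorname{supp}(x)$, so that $|\mathcal{T}|\le s$. Since $x$ is supported in $\mathcal{T}$, we have $((\mathbb{I}-vA^\top A)x)_{\mathcal{I}}=\big((\mathbb{I}-vA_{\mathcal{T}}^\top A_{\mathcal{T}})x_{\mathcal{T}}\big)_{\mathcal{I}}$. Restricting a vector to the coordinates in $\mathcal{I}$ does not increase its $\ell_2$ norm. It therefore suffices to bound $\|\mathbb{I}-vA_{\mathcal{T}}^\top A_{\mathcal{T}}\|_2$. We split
\[
\mathbb{I}-vA_{\mathcal{T}}^\top A_{\mathcal{T}}=(1-v)\mathbb{I}+v(\mathbb{I}-A_{\mathcal{T}}^\top A_{\mathcal{T}}).
\]
The triangle inequality then bounds its spectral norm by $|1-v|+v\delta_s$. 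As a cross-check, the eigenvalues of this matrix are $1-v\mu$ with $\mu\in[1-\delta_s,1+\delta_s]$, which gives the same bound.

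For (ii), the operator norm of $A_{\mathcal{I}}^\top$ equals that of $A_{\mathcal{I}}$, namely $\sqrt{\lambda_{\max}(A_{\mathcal{I}}^\top A_{\mathcal{I}})}\le\sqrt{1+\delta_s}$ by the upper inequality in \eqref{eq-RIC}. Hence $\|A_{\mathcal{I}}^\top\varepsilon\|_2\le\sqrt{1+\delta_s}\|\varepsilon\|_2$.

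For (iii), set $\mathcal{T}:=\mathcal{I}\cup\mathcal{J}$. The matrix $A_{\mathcal{J}}^\top A_{\mathcal{I}}$ is the off-diagonal block of $A_{\mathcal{T}}^\top A_{\mathcal{T}}$. Since $\mathcal{I}\cap\mathcal{J}=\emptyset$, it is also the corresponding off-diagonal block of $A_{\mathcal{T}}^\top A_{\mathcal{T}}-\mathbb{I}$, because the identity contributes nothing off the diagonal. The spectral norm of a submatrix is at most that of the full matrix, so $\|A_{\mathcal{J}}^\top A_{\mathcal{I}}\|_2\le\|A_{\mathcal{T}}^\top A_{\mathcal{T}}-\mathbb{I}\|_2\le\delta_s$, which gives the claim.

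The only step that needs care is (iii), specifically the observation that subtracting the identity leaves the off-diagonal block unchanged. The remaining steps are direct consequences of the eigenvalue bounds implied by \eqref{eq-RIC}.
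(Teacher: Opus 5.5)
Your proof is correct, and it is the standard argument. The paper gives no proof of its own but imports the lemma from \cite[Lemma 1]{HuMP25}, \cite[Proposition 3.1]{NeedellCoSaMP2009} and \cite[Lemmas 6.16 and 6.20]{FoucartRauhut2013}, where these estimates come from the same bounds $\|\mathbb{I}-A_{\mathcal{T}}^\top A_{\mathcal{T}}\|_2\le\delta_s$ and $\|A_{\mathcal{T}}\|_2\le\sqrt{1+\delta_s}$ for $|\mathcal{T}|\le s$. Your splitting $(1-v)\mathbb{I}+v(\mathbb{I}-A_{\mathcal{T}}^\top A_{\mathcal{T}})$ handles a general stepsize $v\ge 0$ in (i), and your off-diagonal-block observation in (iii) is the same disjoint-support device used in those references.
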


\subsection{Iterative thresholding algorithm with truncation technique}

In the sequence generated by sparse optimization algorithms, the components with large magnitude are viewed as good approximation to the ground true sparse solution, while the small components can only be attributed to the presence of noise in observation. In order to avert the noise causing by the small components, the truncation operator, denoted by $\mathbb{H}_s$ that sets all but the largest $s$ elements of a vector (in magnitude) to zero,
is widely applied in sparse optimization algorithms to ensure the sparsity structure of the solution by discarding the small components; see \cite{Blumensath09, FoucartRauhut2013, ZhaoSIAM2020} and references therein.

By virtue of the standard $\ell_{1\text{-}2}$ thresholding operator \eqref{eq-threshold} and the truncation operator, we propose an iterative thresholding algorithm with the truncation technique (ITAT) to solve the $\ell_{1\text{-}2}$ regularization problem \eqref{eq-L1-L2} and approach the ground true sparse solution of \eqref{eq-LS}.


\begin{algorithm}[ITAT]\label{alg-ITAT}
Select a regularization parameter $\lambda>0$, a truncation parameter $s\in \IN$, and set a random initial point $x^0 \in \R^n$ and stepsize $v>0$.
For each $k\in \IN$, having $x^k$, we determine $x^{k+1}$ via
\begin{equation}\label{eq-ILTHard}
x^{k+1}:=\mathbb{H}_s \circ \mathbb{T}_{v\lambda}\left(x^k-v A^\top(Ax^k-b)\right).
\end{equation}
\end{algorithm}

\begin{remark}
{\rm (i)} 
The ITAT adopts the truncation operator $\mathbb{H}_s$ to maintain the sparsity level $s$ of the sequence $\{x^k\}$, which is helpful for guaranteeing the convergence to the ground true solution with the required sparsity of \eqref{eq-LS}; see Theorem \ref{thm-Hard}.

{\rm (ii)}
Since the truncation operator $\mathbb{H}_s$ and the $\ell_{1\text{-}2}$ thresholding operator $\mathbb{T}_{\lambda}$ are very simple to calculate, the ITAT inherits the significant advantages of simple formulation and low computational complexity, and thus is extremely efficient for large-scale problems.
\end{remark}

The following lemma recalls from \cite[Theorem 1]{ShenLi2018} a tight bound of truncation operator $\mathbb{H}_s(\cdot)$, which is useful in convergence analysis.

\begin{lemma}\label{lem-truncation}
Let $x\in \R^n$ be an $s$-sparse vector and $z\in \R^n$ be an arbitrary vector. Then
\[
\|\mathbb{H}_s(z)-x\|_2\le \frac{\sqrt{5}+1}2 \|z-x\|_2.
\]
\end{lemma}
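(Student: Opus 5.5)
The plan is a direct support-splitting argument that reduces the claim to a two-variable quadratic inequality. Set $\mathcal{T}:=\operatorname{supp}(\mathbb{H}_s(z))$, the indices of the $s$ largest entries of $z$ in magnitude. We may take $|\mathcal{T}|=s$, padding with zero entries of $z$ if needed, and let $\mathcal{S}:=\operatorname{supp}(x)$, so $|\mathcal{S}|\le s$. The basic observation is that $|\mathcal{S}\setminus\mathcal{T}|\le|\mathcal{T}\setminus\mathcal{S}|$, because $|\mathcal{S}|\le|\mathcal{T}|$. Moreover, every $|z_i|$ with $i\in\mathcal{T}\setminus\mathcal{S}$ dominates every $|z_j|$ with $j\in\mathcal{S}\setminus\mathcal{T}$, by the definition of $\mathbb{H}_s$. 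Combining these two facts gives
\[
c:=\|z_{\mathcal{T}\setminus\mathcal{S}}\|_2\ \ge\ \|z_{\mathcal{S}\setminus\mathcal{T}}\|_2=:b .
\]

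Next I will expand both sides over the disjoint pieces $\mathcal{T}\cap\mathcal{S}$, $\mathcal{T}\setminus\mathcal{S}$, $\mathcal{S}\setminus\mathcal{T}$ and the rest. Write $u:=\|z_{\mathcal{T}\cap\mathcal{S}}-x_{\mathcal{T}\cap\mathcal{S}}\|_2^2$ and $a:=\|x_{\mathcal{S}\setminus\mathcal{T}}\|_2$. Since $\mathbb{H}_s(z)$ equals $z$ on $\mathcal{T}$ and $0$ off $\mathcal{T}$, and $x$ vanishes off $\mathcal{S}$,
\[
\|\mathbb{H}_s(z)-x\|_2^2 = u + c^2 + a^2 .
\]
For the other side, I drop the indices outside $\mathcal{T}\cup\mathcal{S}$ and use the reverse triangle inequality $\|z_{\mathcal{S}\setminus\mathcal{T}}-x_{\mathcal{S}\setminus\mathcal{T}}\|_2\ge (a-b)_+$. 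This gives
\[
\|z-x\|_2^2\ \ge\ u + c^2 + (a-b)_+^2 .
\]
With $\gamma:=\frac{\sqrt5+1}{2}$ (so $\gamma\ge1$), the term $u$ is harmless. It therefore suffices to prove
\[
c^2+a^2\le \gamma^2\bigl(c^2+(a-b)_+^2\bigr)\quad\text{whenever } c\ge b\ge 0,\ a\ge 0 .
\]

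The main step is this scalar inequality. First I reduce to $c=b$: increasing $c$ adds $c^2$ on the left but $\gamma^2c^2\ge c^2$ on the right, so the worst case is $c=b$. The case $b\ge a$ is trivial, since then $(a-b)_+=0$ and we only need $a^2+b^2\le\gamma^2 b^2$, which follows from $a\le b$ and $\gamma^2\ge 2$. For $b<a$, we need
\[
\sup_{(a,b)\neq 0}\frac{a^2+b^2}{b^2+(a-b)^2}\le\gamma^2 .
\]
This is a generalized Rayleigh quotient of the identity against the positive definite matrix $M=\begin{pmatrix}1&-1\\-1&2\end{pmatrix}$. Its maximum is the largest root of $\det(I-\mu M)=0$, i.e.\ the reciprocal of the smallest eigenvalue of $M$. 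That eigenvalue is $\frac{3-\sqrt5}{2}$, whose reciprocal is $\frac{3+\sqrt5}{2}=\gamma^2$. This closes the argument.

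The only delicate point is the comparison $c\ge b$, which relies on the cardinality inequality and on the ordering property of $\mathbb{H}_s$. I will state it carefully, including the tie-breaking and padding convention when $z$ has fewer than $s$ nonzero entries.
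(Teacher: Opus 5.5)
Your proof is correct. It necessarily differs from the paper's treatment, because the paper gives no argument for this lemma and simply imports it from \cite[Theorem 1]{ShenLi2018}.

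Your route is self-contained and elementary. The count $|\mathcal{S}\setminus\mathcal{T}|\le|\mathcal{T}\setminus\mathcal{S}|$, together with the ordering property of $\mathbb{H}_s$, gives $c\ge b$. Everything then collapses to a $2\times 2$ quadratic-form comparison whose extremal ratio is $1/\lambda_{\min}(M)=\frac{3+\sqrt5}{2}=\bigl(\frac{\sqrt5+1}{2}\bigr)^2$. The intermediate reductions all check out: discarding $u$ since $\gamma\ge 1$, monotonicity in $c$, and the case $b\ge a$ via $\gamma^2\ge 2$.

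Your argument also shows where the golden ratio comes from and makes sharpness visible. Equality is approached when $u=0$, $c=b$ (a tie at the threshold), $z_{\mathcal{S}\setminus\mathcal{T}}$ is a nonnegative multiple of $x_{\mathcal{S}\setminus\mathcal{T}}$, and $(a,b)$ is an eigenvector of $M$ for its smallest eigenvalue. This already happens for $n=2$, $s=1$, with $x=(1,0)$, $z=(b,b+\epsilon)$, $b=\frac{\sqrt5-1}{2}$ and $\epsilon\downarrow 0$.

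What the citation buys instead is generality the paper does not need. Shen and Li handle $\mathbb{H}_k$ for every $k\ge s$, with a constant depending on $k$, $s$ and $n$ that reduces to $\frac{\sqrt5+1}{2}$ at $k=s$, and they also prove tightness.
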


The main result of this subsection is as follows, in which we establish the convergence of the ITAT to an approximate true sparse solution of \eqref{eq-LS} under the assumption of the RIP.


\begin{theorem}\label{thm-Hard}
Suppose that $A$ satisfies the $3s$-RIP with $\delta_{3s}<\frac{\sqrt{5}-1}2 \approx 0.618$, and let $\{x^k\}$ be a sequence generated by Algorithm \ref{alg-ITAT} with stepsize satisfying 
\begin{equation}\label{eq-thm-Hard-asp-RIP}
\frac{0.382}{1-\delta_{3s}}\approx \frac{3-\sqrt{5}}{2(1-\delta_{3s})} < v< \frac{\sqrt{5}+1}{2(1-\delta_{3s})} \approx \frac{1.618}{1-\delta_{3s}}.
\end{equation}
Then $\{x^k\}$ converges approximately to $\bar{x}$ at a geometric rate; particularly,
\begin{equation}\label{thm-Hard-eq1}
\|x^k - \bar{x}\|_2 \le \rho^{k}\|x^0 - \bar{x}\|_2+ \frac{v(\sqrt{5}+1)}{2(1-\rho)}(\sqrt{1+\delta_{2s}} \|\varepsilon\|_2 +\sqrt{2s} \lambda),
\end{equation}
where $\rho:= \frac{\sqrt{5}+1}2(|1-v| + v\delta_{3s}) \in (0,1)$.
\end{theorem}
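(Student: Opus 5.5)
The plan is a one-step contraction estimate followed by unrolling the recursion. Fix $k$ and write $u^k := x^k - vA^\top(Ax^k-b)$ and $y^k := \mathbb{T}_{v\lambda}(u^k)$, so that $x^{k+1} = \mathbb{H}_s(y^k)$. Since $\bar{x}$ is $s$-sparse, Lemma \ref{lem-truncation} gives
\[
\|x^{k+1}-\bar{x}\|_2 \le \frac{\sqrt5+1}{2}\|y^k-\bar{x}\|_2 .
\]
The difficulty is that $y^k$ need not be sparse, so $\|y^k-\bar x\|_2$ cannot be bounded on all of $[n]$ by RIP arguments. I would therefore apply the truncation lemma on a restricted index set. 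Let $\mathcal{T}:=\mathcal{S}\cup \operatorname{supp}(x^k)\cup\operatorname{supp}(x^{k+1})$, which has $|\mathcal{T}|\le 3s$. The key observation is that $x^{k+1}=\mathbb{H}_s(y^k)$ also equals $\mathbb{H}_s(y^k_{\mathcal{T}})$, because the $s$ largest entries of $y^k$ lie in $\operatorname{supp}(x^{k+1})\subseteq\mathcal{T}$ (ties can be broken consistently). Applying Lemma \ref{lem-truncation} to $z=y^k_{\mathcal{T}}$ then gives
\[
\|x^{k+1}-\bar x\|_2\le \frac{\sqrt5+1}{2}\|(y^k-\bar x)_{\mathcal{T}}\|_2 .
\]

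Next I split the restricted error as
\[
(y^k-\bar x)_{\mathcal{T}} = (y^k-u^k)_{\mathcal{T}} + (u^k-\bar x)_{\mathcal{T}} .
\]
For the first term, the shrinkage property of Lemma \ref{lem-LTO}(ii) with parameter $v\lambda$ gives $\|y^k-u^k\|_\infty\le v\lambda$. For the second term, substitute $b=A\bar x+\varepsilon$ to get
\[
u^k-\bar x = (\mathbb{I}-vA^\top A)(x^k-\bar x) + vA^\top\varepsilon .
\]
Since $\operatorname{supp}(x^k-\bar x)\cup\mathcal{T}=\mathcal{T}$ has size at most $3s$, Lemma \ref{lem-RIP}(i) yields $\|((\mathbb{I}-vA^\top A)(x^k-\bar x))_{\mathcal{T}}\|_2\le(|1-v|+v\delta_{3s})\|x^k-\bar x\|_2$. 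Lemma \ref{lem-RIP}(ii) controls $\|A^\top_{\mathcal{T}}\varepsilon\|_2$.

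Getting the constants of \eqref{thm-Hard-eq1} exactly right — $\sqrt{1+\delta_{2s}}$ and $\sqrt{2s}\lambda$ rather than $3s$-versions — is the step I expect to be the main obstacle. A cruder $\mathcal{T}$-based bound gives $\sqrt{1+\delta_{3s}}$ and $\sqrt{3s}v\lambda$. My first attempt to sharpen it would be to treat the noise and thresholding contributions only on $\mathcal{S}\cup\operatorname{supp}(x^{k+1})$, which has size at most $2s$, and the operator term on $\mathcal{T}$. To make this legitimate I would use that Lemma \ref{lem-truncation} only needs the error of $z$ on $\mathcal{S}\cup\operatorname{supp}(\mathbb{H}_s z)$ plus a comparison argument. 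If that refinement fails, I would accept the $3s$ constants (noting $\delta_{2s}\le\delta_{3s}$) as a slightly weaker but structurally identical bound.

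Combining the estimates gives the recursion
\[
\|x^{k+1}-\bar x\|_2\le\rho\|x^k-\bar x\|_2+\frac{\sqrt5+1}{2}\,v\bigl(\sqrt{1+\delta_{2s}}\|\varepsilon\|_2+\sqrt{2s}\,\lambda\bigr).
\]
It remains to check $\rho<1$. If $v\le1$, then $\rho=\frac{\sqrt5+1}{2}(1-v(1-\delta_{3s}))<1$ iff $v>\frac{3-\sqrt5}{2(1-\delta_{3s})}$. If $v>1$, then $\rho=\frac{\sqrt5+1}{2}(v(1+\delta_{3s})-1)$, and $\rho<1$ is the upper stepsize condition. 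I would verify that this condition is compatible with \eqref{eq-thm-Hard-asp-RIP} and uses $\delta_{3s}<\frac{\sqrt5-1}{2}$; this needs care, and if necessary I would read the upper bound in \eqref{eq-thm-Hard-asp-RIP} as implicitly requiring $\rho<1$. Unrolling the recursion and summing the geometric series $\sum_j\rho^j\le 1/(1-\rho)$ then yields \eqref{thm-Hard-eq1}.
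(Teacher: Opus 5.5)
Your route is the paper's: split the iteration into gradient step, thresholding and truncation; apply Lemma \ref{lem-truncation} to a restriction of the thresholded vector; bound the three pieces with Lemma \ref{lem-LTO}(ii) and Lemma \ref{lem-RIP}(i),(ii); then unroll. As written, however, you only commit to the $3s$-constants, and the refinement you leave open is immediate. No comparison argument is needed. You put $\operatorname{supp}(x^k)$ into $\mathcal{T}$ only to make $\operatorname{supp}(x^k-\bar x)\cup\mathcal{T}$ have size at most $3s$. But Lemma \ref{lem-RIP}(i) does not require the projection set to contain $\operatorname{supp}(x^k-\bar x)$; it only requires the \emph{union} to have size at most $3s$.

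So work on $\mathcal{I}_{k+1}:=\mathcal{S}\cup\operatorname{supp}(x^{k+1})$, which has $|\mathcal{I}_{k+1}|\le 2s$. Your tie-breaking observation applies verbatim: the top $s$ entries of $y^k$ lie in $\operatorname{supp}(x^{k+1})\subseteq\mathcal{I}_{k+1}$, so $x^{k+1}=\mathbb{H}_s(y^k_{\mathcal{I}_{k+1}})$. Lemma \ref{lem-truncation} then gives $\|x^{k+1}-\bar x\|_2\le\frac{\sqrt5+1}{2}\|(y^k-\bar x)_{\mathcal{I}_{k+1}}\|_2$, and the three pieces are bounded as follows:
\begin{itemize}
\item the thresholding term satisfies $\|(y^k-u^k)_{\mathcal{I}_{k+1}}\|_2\le\sqrt{2s}\,v\lambda$;
\item the noise term satisfies $v\|A^\top_{\mathcal{I}_{k+1}}\varepsilon\|_2\le v\sqrt{1+\delta_{2s}}\|\varepsilon\|_2$;
\item the operator term is at most $(|1-v|+v\delta_{3s})\|x^k-\bar x\|_2$, because $|\mathcal{I}_{k+1}\cup\operatorname{supp}(x^k-\bar x)|\le|\mathcal{S}\cup\operatorname{supp}(x^k)\cup\operatorname{supp}(x^{k+1})|\le 3s$.
\end{itemize}
This is exactly the paper's argument and yields the stated $\sqrt{1+\delta_{2s}}$ and $\sqrt{2s}\,\lambda$.

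On the stepsize, your suspicion is correct, and you should adopt your fallback outright rather than try to verify compatibility. For $v\ge 1$, $\rho<1$ holds if and only if $v<\frac{\sqrt5+1}{2(1+\delta_{3s})}$. This is strictly smaller than the upper bound $\frac{\sqrt5+1}{2(1-\delta_{3s})}$ in \eqref{eq-thm-Hard-asp-RIP} whenever $\delta_{3s}>0$. For example, $\delta_{3s}=\frac12$ and $v=2$ lie in the stated range, yet $\rho=\frac{\sqrt5+1}{2}\cdot 2=\sqrt5+1>1$. The paper's proof asserts $\rho\in(0,1)$ on the whole stated range, which is false. The upper bound's denominator should be $1+\delta_{3s}$, or equivalently $\rho<1$ must be imposed as a hypothesis. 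With that correction, $\delta_{3s}<\frac{\sqrt5-1}{2}$ is exactly the condition making both $\left(\frac{3-\sqrt5}{2(1-\delta_{3s})},1\right]$ and $\left[1,\frac{\sqrt5+1}{2(1+\delta_{3s})}\right)$ nonempty, and your unrolling of the recursion finishes the proof.
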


\begin{proof}
To proceed the convergence analysis, we rewrite the process of Algorithm \ref{alg-ITAT} into the following three steps:
\begin{equation}\label{eq-thm-Hard-1}
  y^{k}:=x^k-vA^\top(Ax^k-b),\quad
  z^{k}:=\mathbb{T}_{v\lambda}(y^k),\quad
  x^{k+1}:=\mathbb{H}_s(z^k).
\end{equation}
Moreover, for the sake of simplicity, we write
\begin{equation}\label{eq-thm-Hard-1a}
r^k:=x^k-\bar{x}, \quad \mathcal{S}_k:={\rm supp}(x^k)\quad \mbox{and} \quad \mathcal{I}_{k}=\mathcal{S} \cup \mathcal{S}_{k};
\end{equation}
and then one observes from the $s$-sparsity of $\bar{x}$ and the third equality of \eqref{eq-thm-Hard-1} that
\begin{equation}\label{eq-thm-Hard-1aa}
|\mathcal{S}|=s, \quad |\mathcal{S}_k|\le s \quad \mbox{and} \quad |\mathcal{I}_{k}|\le 2s \quad \mbox{for each $k\in \IN$}.
\end{equation}

Noting by \eqref{eq-thm-Hard-1a} that
\begin{equation}\label{eq-thm-Hard-1b}
{\rm supp}(\bar{x}) = \mathcal{S} \subseteq \mathcal{I}_{k+1} \quad \mbox{and} \quad {\rm supp}(x^{k+1}) =\mathcal{S}_{k+1} \subseteq \mathcal{I}_{k+1},
\end{equation}
we get by \eqref{eq-thm-Hard-1} and by Lemma \ref{lem-truncation} (with $z^{k}_{\mathcal{I}_{k+1}}$ and $\bar{x}_{\mathcal{I}_{k+1}}$ in place of $z$ and $x$) that
\begin{align}
\|r^{k+1}\|_2 & = \|x^{k+1}_{\mathcal{I}_{k+1}} - \bar{x}_{\mathcal{I}_{k+1}}\|_2 \notag \\
& \le \frac{\sqrt{5}+1}2\|z^{k}_{\mathcal{I}_{k+1}}-\bar{x}_{\mathcal{I}_{k+1}}\|_2 \label{eq-thm-Hard-3} \\
& \le \frac{\sqrt{5}+1}2(\|\bar{x}_{\mathcal{I}_{k+1}}-y^{k}_{\mathcal{I}_{k+1}}\|_2+\|y^{k}_{\mathcal{I}_{k+1}}-z^{k}_{\mathcal{I}_{k+1}}\|_2). \notag
\end{align}
%

Noting by \eqref{eq-thm-Hard-1} that $z^{k}=\mathbb{T}_{v\lambda}(y^k)$, we have by Lemma \ref{lem-LTO} that $\|z^k-y^k\|_{\infty}\le v\lambda$. Combining this with \eqref{eq-thm-Hard-1aa}, we achieve by \eqref{eq-L1-2} that
\begin{equation}\label{eq-thm-Hard-3a}
\|y^{k}_{\mathcal{I}_{k+1}}-z^k_{\mathcal{I}_{k+1}}\|_2 \le \sqrt{|\mathcal{I}_{k+1}|}\|y^{k}-z^k\|_\infty \le \sqrt{2s} v\lambda.
\end{equation}
On the other hand, we obtain by the first equality of \eqref{eq-thm-Hard-1} and \eqref{eq-true-sol-e} that
\[
y^{k}=x^k- v A^\top(Ax^k-b) =x^k - v A^\top Ar^k+ v A^\top \varepsilon
\]
(due to \eqref{eq-thm-Hard-1a}); hence it follows that
\begin{equation}\label{eq-thm-Hard-4}
\|y^k_{\mathcal{I}_{k+1}} - \bar{x}_{\mathcal{I}_{k+1}}\|_2 \le \|((I-vA^\top A)r^k)_{\mathcal{I}_{k+1}}\|_2+v\|A_{\mathcal{I}_{k+1}}^\top \varepsilon\|_2.
\end{equation}
Note by \eqref{eq-thm-Hard-1b} that $|\mathcal{I}_{k+1}|\le 2s$ and $|\mathcal{I}_{k+1}\cup{\rm supp}(r^k)|=|\mathcal{I}_{k+1}\cup \mathcal{S}_k|\le 3s$. Then by the assumption of $3s$-RIP of $A$ and \eqref{eq-thm-Hard-asp-RIP}, we obtain by Lemmas \ref{lem-RIP}(i) and (ii) that
\[
\|((I-vA^\top A)r^k)_{\mathcal{I}_{k+1}}\|_2 \le (|1-v| + v\delta_{3s})\|r^k\|_2 \quad \mbox{and} \quad \|A_{\mathcal{I}_{k+1}}^\top \varepsilon\|_2 \le \sqrt{1+\delta_{2s}}\|\varepsilon\|_2,
\]
respectively. By the above two inequalities, \eqref{eq-thm-Hard-4} is reduced to
\begin{equation*}
\|y^k_{\mathcal{I}_{k+1}} - \bar{x}_{\mathcal{I}_{k+1}}\|_2\le (|1-v| + v\delta_{3s})\|r^k\|_2+v\sqrt{1+\delta_{2s}}\|\varepsilon\|_2.
\end{equation*}
This, together with \eqref{eq-thm-Hard-3} and \eqref{eq-thm-Hard-3a}, yields that
\begin{equation}\label{eq-thm-Hard-5}
\|r^{k+1}\|_2 \le \frac{\sqrt{5}+1}2(|1-v| + v\delta_{3s})\|r^k\|_2+\frac{\sqrt{5}+1}2v (\sqrt{1+\delta_{2s}} \|\varepsilon\|_2 +\sqrt{2s} \lambda)
\end{equation}
Let $\rho:=\frac{\sqrt{5}+1}2(|1-v| + v\delta_{3s})$. By the RIP assumption that $\delta_{3s}<\frac{\sqrt{5}-1}2$, we can check that $\rho \in (0,1)$ when either $\frac{3-\sqrt{5}}{2(1-\delta_{3s})} < v \le 1$ or $1\le v < \frac{\sqrt{5}+1}{2(1-\delta_{3s})}$, that is under the assumption \eqref{eq-thm-Hard-asp-RIP}. Then we obtain inductively by \eqref{eq-thm-Hard-5} that
\begin{align}
\| x^{k+1} - \bar{x}\|_2 &\le \rho\|x^k-\bar{x}\|_2+\frac{\sqrt{5}+1}2v (\sqrt{1+\delta_{2s}} \|\varepsilon\|_2 +\sqrt{2s} \lambda) \notag \\
&\le \cdots \notag \\
&\le \rho^{k+1}\|x^0-\bar{x}\|_2+\frac{v(\sqrt{5}+1)}{2(1-\rho)}(\sqrt{1+\delta_{2s}} \|\varepsilon\|_2 +\sqrt{2s} \lambda). \notag
\end{align}
The proof is complete.
\end{proof}

\begin{remark}\label{rem-ILSTAT}

{\rm (i)} Theorem \ref{thm-Hard} shows a geometric convergence rate of the ITAT to an approximate true sparse solution of \eqref{eq-LS} within a tolerance. The tolerance in \eqref{thm-Hard-eq1} has an additive form of a term on noise level $\mathcal{O}(\|\varepsilon\|_2)$ and a term on regularization parameter $\mathcal{O}(\sqrt{s}\lambda)$, that has the same order with the recovery bound in \eqref{eq-RB}. The term $\mathcal{O}(\sqrt{s}\lambda)$ could be small when a small regularization parameter $\lambda$ is selected, and it will be illustrated in our numerical experiments in Section \ref{sec-numer} that the best regularization parameter is about $\lambda = 10^{-3}$.

{\rm (ii)} Theorem \ref{thm-Hard} improves the convergence result in \cite[Theorem 2]{HuMP25} in the sense that it requires the weaker RIP and stepsize assumptions.
In particular, the assumptions that $\delta_{3s}< 0.618$ and \eqref{eq-thm-Hard-asp-RIP} are weaker than the ones that $\delta_{3s}< \frac12$ and $
\frac1{2(1-\delta_{3s})} < v\le \frac1{1-\delta_{3s}}$ assumed in \cite[Theorem 2]{HuMP25}.

{\rm (iii)} By \eqref{thm-Hard-eq1}, we obtain the complexity of the ITAT that
\begin{equation}\label{thm-Hard-complex}
\|x^{k^*} - \bar{x}\|_2 \le \frac{\sqrt{5}+3-2\rho}{1-\rho} v(\sqrt{1+\delta_{2s}} \|\varepsilon\|_2 +\sqrt{2s} \lambda),
\end{equation}
holds after at most $k^*:= \lceil \log_{\rho^{-1}} \frac{\|x^0 - \bar{x}\|}{v(\sqrt{1+\delta_{2s}} \|\varepsilon\|_2 +\sqrt{2s} \lambda)} \rceil $ iterates. Indeed, we have by definition of $k^*$ that
\begin{equation*}
\rho^{k^*}\|x^0 - \bar{x}\|_2 \le v(\sqrt{1+\delta_{2s}} \|\varepsilon\|_2 +\sqrt{2s} \lambda).
\end{equation*}
This, together with \eqref{thm-Hard-eq1}, implies \eqref{thm-Hard-complex}.
\end{remark}

%
%

\subsection{Iterative thresholding algorithm with continuation technique}

Note that the regularization parameter $\lambda$ plays an important role in the numerical performance of sparse optimization algorithms. According to the recovery bound theory, the regularization parameter $\lambda$ should be small to guarantee the better recovery; however, the computational mathematics theory and extensive numerical studies show that a too small parameter will result in the ill-posedness of the subproblems and the convergence is faster if the parameter is properly larger.
To inherit both advantages in theoretical and numerical aspects, the continuation technique is a commonly used acceleration strategy for sparse optimization algorithms, that
adopts a geometrically decreasing sequence of regularization parameters $\{\lambda_k\}$ starting at a large one in place of a fixed but small one; see, e.g., \cite{HaleYinZhang08, WenYin2010, TZhang13, Yin08}. 

Inspired by the idea of the $\ell_{1\text{-}2}$ thresholding operator \eqref{eq-threshold} and the continuation technique, we propose an iterative thresholding algorithm with the continuation technique (ITAC) to solve the $\ell_{1\text{-}2}$ regularization problem \eqref{eq-L1-L2}.

\begin{algorithm}[ITAC]\label{alg-ITAC}
Select the regularization parameters $\lambda_0>0$, $\lambda>0$ and a continuation parameter $\gamma\in (0,1)$, and set the initial point $x^0:=0$ and stepsize $v>0$.
For each $k\in \IN$, having $x^k$ and $\lambda_k$, if $\lambda_k<\lambda$, then it stops and outputs $x^*:=x^k$; otherwise, update the regularization parameter $\lambda_{k+1}:=\gamma \lambda_k$ and determine $x^{k+1}$ via
\begin{equation}\label{eq-ILTC}
x^{k+1}_i:=\mathbb{T}_{v\lambda_k}\left(x^k-vA^\top(Ax^k-b)\right). 
\end{equation}
\end{algorithm}

\begin{remark}
The only difference between ITAC and ITA is the use of a dynamical sequence of regularization parameters.
The ITAC inherits the significant advantages of simple formulation and low computational complexity, and thus is extremely efficient for large-scale sparse optimization problems.
\end{remark}

The main theorem of this subsection is as follows, which provides certain parameters setting (relevant to the RIP) in the ITAC to guarantee its convergence to an approximate true sparse solution of \eqref{eq-LS} within a tolerance proportional to a noise level. In addition, the support of the output of the ITAC has no false prediction and is exactly a subset of the support of the ground true sparse solution.



\begin{theorem}\label{thm-ITAC}
Suppose that $A$ satisfies the RIP with
\begin{equation}\label{asp-ROC}
(\sqrt{s}+1)\delta_{s+1}<1. 
\end{equation}
Let
\begin{equation}\label{eq-eta}
\eta \in \left(0, 1-(\sqrt{s}+1)\delta_{s+1}\right),
\end{equation}
and the stepsize $ v\in (0,1]$ and
\begin{equation}\label{eq-para}
\lambda_0\ge \frac{\|\bar{x}\|_2}{\sqrt{s}+1}, \quad \lambda:=\frac{\sqrt{1+\delta_s}}{\eta}\|\varepsilon\|_2 \quad \mbox{and} \quad \gamma \in \left[\frac{(\sqrt{s}+1)v\delta_{s+1}}{1- \eta} + 1 -v,1\right).
\end{equation}
Let Algorithm \ref{alg-ITAC} with these parameters output $x^*$. Then it holds that
\begin{equation}\label{eq-ILTC-res}
{\rm supp}(x^*) \subseteq \mathcal{S} \quad \mbox{and} \quad \|x^*-\bar{x}\|_2 \le \frac{(1-\eta) \sqrt{1+\delta_s}}{\eta \delta_{s+1}}\|\varepsilon\|_2.
\end{equation}
\end{theorem}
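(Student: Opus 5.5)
We argue by induction on $k$ along the ITAC iterations. The invariant we maintain for every $k$ before stopping is
\[
{\rm supp}(x^k)\subseteq \mathcal{S} \quad\mbox{and}\quad \|x^k-\bar{x}\|_2 \le (\sqrt{s}+1)\lambda_k .
\]
The base case holds trivially: $x^0=0$, so the support condition is immediate, and $\|x^0-\bar x\|_2=\|\bar x\|_2\le(\sqrt{s}+1)\lambda_0$ by the choice of $\lambda_0$ in \eqref{eq-para}.

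For the induction step, set $r^k:=x^k-\bar x$ and $y^k:=x^k-vA^\top(Ax^k-b)=\bar x+(I-vA^\top A)r^k+vA^\top\varepsilon$. First we show that no false support appears. For $i\in\mathcal{S}^c$ we have $y^k_i=((I-vA^\top A)r^k)_i+vA_i^\top\varepsilon$, and ${\rm supp}(r^k)\subseteq\mathcal{S}$. Lemma \ref{lem-RIP}(iii), applied with $\mathcal{J}=\{i\}$ and $|\mathcal{S}\cup\{i\}|\le s+1$, gives $|y^k_i|\le v\delta_{s+1}\|r^k\|_2+v\sqrt{1+\delta_s}\|\varepsilon\|_2$; for the noise term a single column norm bounded by $\sqrt{1+\delta_1}\le\sqrt{1+\delta_s}$ suffices. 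Since $\|\varepsilon\|_2\sqrt{1+\delta_s}=\eta\lambda\le\eta\lambda_k$ (we only iterate while $\lambda_k\ge\lambda$), we get
\[
|y^k_i|\le v\bigl((\sqrt{s}+1)\delta_{s+1}+\eta\bigr)\lambda_k< v\lambda_k
\]
by \eqref{eq-eta}. Lemma \ref{lem-LTO}(i) then gives $x^{k+1}_i=0$, so ${\rm supp}(x^{k+1})\subseteq\mathcal{S}$.

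Next we bound the error on $\mathcal{S}$. We have $\|x^{k+1}-\bar x\|_2=\|x^{k+1}_{\mathcal S}-\bar x_{\mathcal S}\|_2\le\|y^k_{\mathcal S}-\bar x_{\mathcal S}\|_2+\|x^{k+1}_{\mathcal S}-y^k_{\mathcal S}\|_2$. The last term is at most $\sqrt{s}\,v\lambda_k$ by Lemma \ref{lem-LTO}(ii). For the first term, Lemma \ref{lem-RIP}(i) with $|\mathcal{S}|\le s$ yields $(1-v+v\delta_s)\|r^k\|_2$, and Lemma \ref{lem-RIP}(ii) gives $v\sqrt{1+\delta_s}\|\varepsilon\|_2\le v\eta\lambda_k$. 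Altogether
\[
\|r^{k+1}\|_2\le (1-v+v\delta_{s+1})(\sqrt{s}+1)\lambda_k+v(\sqrt{s}+\eta)\lambda_k ,
\]
and we must show this is at most $(\sqrt{s}+1)\gamma\lambda_k$. This is the main obstacle: the raw bound carries an extra $v\sqrt{s}\lambda_k$ from the shrinkage, which the stated lower bound on $\gamma$ does not obviously absorb. To fix it, I would refine the thresholding estimate on $\mathcal{S}$. By Remark \ref{rem-LTO}, $|(\mathbb{T}(y))_i-y_i|=v\lambda_k(1-|S_i|/\|S\|_2)$, and this sums in $\ell_2$ over $\mathcal{S}$ to at most roughly $v\lambda_k(\sqrt{s}-1)$ or better via $\|1-u/\|u\|\|$-type bounds. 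If necessary, I would also pick the invariant constant differently, e.g.\ $\|r^k\|_2\le c\lambda_k$ with $c$ chosen so that the recursion closes to exactly the stated $\gamma$ condition $\gamma\ge \frac{(\sqrt{s}+1)v\delta_{s+1}}{1-\eta}+1-v$. This requires reconciling the invariant with the constant $\frac{(1-\eta)}{\eta\delta_{s+1}}$ in the final bound. That constant suggests the invariant is really $\delta_{s+1}\|r^k\|_2\le(1-\eta)\lambda_k$, i.e.\ $c=(1-\eta)/\delta_{s+1}$, which still makes the off-support step work since $v(\delta_{s+1}c+\eta)=v\lambda$-level. I would adopt this invariant, noting that the $\lambda_0$ condition combined with \eqref{eq-eta} gives $\|\bar x\|_2\le(\sqrt{s}+1)\lambda_0\le c\lambda_0$.

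Finally, at termination we have $\lambda_k<\lambda$ but the previous parameter satisfied $\lambda_{k-1}\ge\lambda$, so $\lambda_k\le\lambda$. The invariant then gives ${\rm supp}(x^*)\subseteq\mathcal S$ and
\[
\|x^*-\bar x\|_2\le c\lambda=\frac{1-\eta}{\delta_{s+1}}\cdot\frac{\sqrt{1+\delta_s}}{\eta}\|\varepsilon\|_2 ,
\]
which is \eqref{eq-ILTC-res}.
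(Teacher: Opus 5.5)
Once you adopt the invariant ${\rm supp}(x^k)\subseteq\mathcal{S}$, $\|r^k\|_2\le c\lambda_k$ with $c=(1-\eta)/\delta_{s+1}$, your argument is exactly the paper's proof: the same base case, off-support thresholding step, on-support triangle inequality and termination. The on-support step you left unchecked closes at once, because substituting $c$ for $\sqrt{s}+1$ in your displayed bound gives $(1-v)c\lambda_k+v(\delta_{s+1}c+\eta)\lambda_k+v\sqrt{s}\lambda_k=c\lambda_k\bigl(1-v+\tfrac{v(\sqrt{s}+1)\delta_{s+1}}{1-\eta}\bigr)\le c\gamma\lambda_k=c\lambda_{k+1}$, which is precisely how the paper uses the lower bound on $\gamma$.

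You should simply discard the initial invariant $(\sqrt{s}+1)\lambda_k$ and the idea of sharpening the shrinkage estimate. With that invariant, at the smallest admissible $\gamma$ the noise term $v\eta\lambda_k$ alone can exceed the available slack $\frac{v(\sqrt{s}+1)^2\delta_{s+1}}{1-\eta}\lambda_k$ when $\delta_{s+1}$ is small, so no refinement of the thresholding bound can rescue it.
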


\begin{proof}
By assumption \eqref{asp-ROC}, one checks that $1-(\sqrt{s}+1)\delta_{s+1}>0$, and thus $\eta$ in \eqref{eq-eta} is well-defined. This yields that $\frac{(\sqrt{s}+1)v\delta_{s+1}}{1- \eta} + 1 -v<1$. Hence $\gamma$ in \eqref{eq-para} is well-defined, and so does Algorithm \ref{alg-ITAC} with these parameters.

To furniture the proof, let Algorithm \ref{alg-ITAC} generate a finite sequence $\{x^k\}_{k=0}^K$ and output $x^*=x^K$. Write
\begin{equation}\label{eq-thm-ITAC-rho}
\rho:=\frac{1- \eta}{\delta_{s+1}},
\end{equation}
and
\begin{equation}\label{eq-thm-ITAC-2}
y^k:=x^k-v A^\top (Ax^k-b), \quad \mathcal{S}_k:={\rm supp}(x^k) \quad \mbox{and} \quad r^k:=x^k-\bar{x}
\end{equation}
for each $k\in [K]$. We shall show by induction that the following inclusion and estimate hold for each $k\in [K]$:
\begin{equation}\label{eq-thm-ITAC-key}
\mathcal{S}_k\subseteq \mathcal{S} \quad \text{and} \quad \|r^k\|_2 \le \rho \lambda_k.
\end{equation}
By the initial selection that $x^0:=0$, one has that $\mathcal{S}_0=\emptyset \subseteq \mathcal{S}$. By assumption of $\lambda_0$ in \eqref{eq-para} and definition of $\rho$ in \eqref{eq-thm-ITAC-rho}, we obtain by \eqref{eq-eta} that
\[
\rho \lambda_0 \ge \frac{1-\eta}{\delta_{s+1}} \frac{\|\bar{x}\|_2}{\sqrt{s}+1} > \|\bar{x}\|_2 = \|r^0\|_2.
\]
It is shown that \eqref{eq-thm-ITAC-key} holds for $k = 0$.

Now suppose that \eqref{eq-thm-ITAC-key} holds for iterate $k$ ($<K$). Then by \eqref{eq-thm-ITAC-2} and \eqref{eq-true-sol-e}, we have
\begin{equation}\label{eq-thm-ITAC-3}
y^k=x^k-v A^\top (Ax^k-A\bar{x}-\varepsilon)=x^k-v A^\top A_{\mathcal{S}}r^k_{\mathcal{S}} + v A^\top \varepsilon,
\end{equation}
where the second equality follows from the hypothesis $\mathcal{S}_k \subseteq \mathcal{S}$ in \eqref{eq-thm-ITAC-key}.
Fix $i \in S^c$. It follows from the hypothesis $\mathcal{S}_k \subseteq \mathcal{S}$ in \eqref{eq-thm-ITAC-key} that $x_i^k=0$, and then \eqref{eq-thm-ITAC-3} is reduced to
\begin{equation}\label{eq-thm-ITAC-4}
|y_i^k|\le v |A_i^\top A_{\mathcal{S}}r^k_{\mathcal{S}}|+v |A_i^\top \varepsilon|.
\end{equation}
Since $\{i\} \cap \mathcal{S} =\emptyset$, we obtain by Lemmas \ref{lem-RIP}(iii) and (ii) that
\begin{equation*}\label{eq-thm-ITAC-5}
|A_i^\top A_{\mathcal{S}}r^k_{\mathcal{S}}|\le \delta_{s+1}\|(x^k-\bar{x})_{\mathcal{S}}\|_2 = \delta_{s+1} \|r^k\|_2 \quad \mbox{and} \quad |A_i^\top \varepsilon|\le \sqrt{1+\delta_1} \|\varepsilon\|_2\le \sqrt{1+\delta_s} \|\varepsilon\|_2
\end{equation*}
(due to $\mathcal{S}_k \subseteq \mathcal{S}$ and the nondecreasing property that $\delta_1 \le \delta_s$).
This, together with \eqref{eq-thm-ITAC-4}, yields that
\begin{equation}\label{eq-thm-ITAC-5b}
|y_i^k|\le v \delta_{s+1} \|r^k\|_2+v \sqrt{1+\delta_s}\|\varepsilon\|_2.
\end{equation}
By the stop criterion that $\lambda_k< \lambda$ and the definition of $\lambda$ in \eqref{eq-para}, one sees that $\lambda_k\ge \lambda=\frac{\sqrt{1+\delta_s}}{\eta}\|\varepsilon\|_2$, that is, $\|\varepsilon\|_2\le \frac{\eta\lambda_k}{\sqrt{1+\delta_s}}$. This, together with hypothesis \eqref{eq-thm-ITAC-key}, deduces \eqref{eq-thm-ITAC-5b} to
\begin{equation}\label{eq-thm-ITAC-6}
|y_i^k|\le v \delta_{s+1} \|r^k\|_2+ v \sqrt{1+\delta_s} \|\varepsilon\|_2 \le v\delta_{s+1} \rho \lambda_k+ v\eta \lambda_k= v\lambda_k,
\end{equation}
where the equality holds by definition of $\rho$ in \eqref{eq-thm-ITAC-rho}.
Hence it follows from \eqref{eq-ThrO} that $x_i^{k+1}=0$; consequently, $i\in \mathcal{S}_{k+1}^c$.
This holds for any $i\in\mathcal{S}^c$. Then 
we get $\mathcal{S}^c \subseteq \mathcal{S}_{k+1}^c$, and equivalently, $\mathcal{S}_{k+1} \subseteq \mathcal{S}$.

On the other hand, we get by the inclusion $\mathcal{S}_{k+1} \subseteq \mathcal{S}$ that
\begin{equation}\label{eq-thm-ITAC-7a0}
\|x^{k+1}-\bar{x}\|_2=\|x^{k+1}_{\mathcal{S}}-\bar{x}_{\mathcal{S}}\|_2\le \|x^{k+1}_{\mathcal{S}}-y^{k}_{\mathcal{S}}\|_2 + \|y^{k}_{\mathcal{S}}-\bar{x}_{\mathcal{S}}\|_2.
\end{equation}
By \eqref{eq-L1-2} and in view of Algorithm \ref{alg-ITAC} that $x^{k+1}=\mathbb{T}_{v\lambda_k}(y^k)$, we obtain that
\begin{equation}
\|x^{k+1}_{\mathcal{S}}-y^{k}_{\mathcal{S}}\|_2\le \sqrt{s} \|x^{k+1}-y^{k}\|_{\infty} = \sqrt{s} \|\mathbb{T}_{v\lambda_k}(y^k)-y^{k}\|_{\infty} \le v \sqrt{s} \lambda_k,
\end{equation}
(by Lemma \ref{lem-LTO}), and by \eqref{eq-thm-ITAC-3} that
\begin{equation}\label{eq-thm-ITAC-7a}
\|y^{k}_{\mathcal{S}}-\bar{x}_{\mathcal{S}}\|_2 = \|((I-vA^\top A)r^{k})_{\mathcal{S}} + v A_{\mathcal{S}}^\top \varepsilon\|_2
\le \|((I-vA^\top A)r^{k})_{\mathcal{S}}\|_2 + v \|A_{\mathcal{S}}^\top \varepsilon\|_2.
\end{equation}
Noting $v\le 1$, it follows from Lemmas \ref{lem-RIP}(i) and (ii) that
\[
\|((I-vA^\top A)r^{k})_{\mathcal{S}}\|_2 \le (1-v+v\delta_s) \|r^k\|_2 \quad \mbox{and} \quad \|A_{\mathcal{S}}^\top \varepsilon\|_2 \le \sqrt{1+\delta_s} \|\varepsilon\|_2,
\]
respectively.
These, together with \eqref{eq-thm-ITAC-7a0}-\eqref{eq-thm-ITAC-7a}, implies that
\begin{equation}\label{eq-thm-ITAC-8a}
\|x^{k+1}-\bar{x}\|_2\le v \sqrt{s} \lambda_k + (1-v+v\delta_s) \|r^k\|_2 + v\sqrt{1+\delta_s} \|\varepsilon\|_2
\end{equation}
By the fact that $\delta_s \le \delta_{s+1}$ and by \eqref{eq-thm-ITAC-6}, one has that
\[
v\delta_s \|r^k\|_2 + v\sqrt{1+\delta_s} \|\varepsilon\|_2 \le v \delta_{s+1} \|r^k\|_2+ v \sqrt{1+\delta_s}\|\varepsilon\|_2 \le v \lambda_k.
\]
Consequently by \eqref{eq-thm-ITAC-key}, \eqref{eq-thm-ITAC-8a} is reduced to
\begin{equation}\label{eq-thm-ITAC-8b}
\|x^{k+1}-\bar{x}\|_2\le \left(\frac{v (\sqrt{s} +1)}{\rho} + 1-v\right)\rho \lambda_k.
\end{equation}
Noting by definition of $\rho$ in \eqref{eq-thm-ITAC-rho} that
\[
\frac{v (\sqrt{s} +1)}{\rho} + 1-v = \frac{(\sqrt{s} +1)v\delta_{s+1}}{1-\eta}+ 1 -v \le \gamma
\]
(due to definition of $\gamma$ in \eqref{eq-para}), \eqref{eq-thm-ITAC-8b} is reduced to $\|x^{k+1}-\bar{x}\|\le \rho \gamma \lambda_k = \rho \lambda_{k+1}$.
This, together with $\mathcal{S}_{k+1} \subseteq \mathcal{S}$, shows that \eqref{eq-thm-ITAC-key} holds for each iterate $k\in [K]$.
Then we conclude by \eqref{eq-thm-ITAC-key} that ${\rm supp}(x^*)\subseteq \mathcal{S}$ and
\[
\|x^*-\bar{x}\|_2\le \rho \lambda_K< \rho \lambda=\frac{(1- \eta)\sqrt{1+\delta_s}}{\delta_{s+1}\eta}\|\varepsilon\|_2
\]
by definitions of $\lambda$ and $\rho$ in \eqref{eq-para} and \eqref{eq-thm-ITAC-rho}. The proof is complete.
\end{proof}

%
%

\begin{remark}
\cite{Yulingjiao17} showed the convergence of the ISTA with the continuation technique (ISTAC) for the $\ell_1$ regularization problem to an approximate true sparse solution of \eqref{eq-LS} under the assumption of the MIP. Theorem \ref{thm-ITAC} extends and improves \cite[Theorem 2]{Yulingjiao17} in several aspects:
\begin{itemize}
 \item[-] Theorem \ref{thm-ITAC} extends \cite[Theorem 2]{Yulingjiao17} in the sense that (a) the $\ell_{1\text{-}2}$ regularization problem \eqref{eq-L1-L2} is an extension of the $\ell_1$ regularization problem considered in \cite{Yulingjiao17}; and (b) Theorem \ref{thm-ITAC} considers the case when $A$ is a general sensing matrix and the stepsize $v\in (0,1]$ that is an extension of \cite{Yulingjiao17} which assumed that $A$ is column normalized and $v=1$.
 \item[-] Theorem \ref{thm-ITAC} improves \cite[Theorem 2]{Yulingjiao17} in the sense that it requires the assumption of the RIP, which is weaker than the MIP assumed in \cite[Theorem 2]{Yulingjiao17}. 
\end{itemize}
\end{remark}

\section{Numerical experiments}\label{sec-numer}
This section aims to conduct numerical experiments of the ITAT and the ITAC for solving the $\ell_{1\text{-}2}$ regularization problem \eqref{eq-L1-L2}, by comparing them with several popular algorithms for
sparse optimization. All numerical experiments are implemented in R (3.5.2) and MATLAB R2018a on a personal desktop (Intel Core Duo i7-8700, 3.20GHz, 32.00 GB of RAM).

The simulation data are generated as follows. The sensing matrix $A\in\mathbb{R}^{m\times n}$ is generated as two types of random matrices:
\begin{enumerate}
  \item[(i)] The random Gaussian matrix, which is defined as
    \[
    A_i \backsim \mathcal{N}(0, \mathbb{I}_m/m).
    \]
  \item[(ii)] The random partial discrete cosine transform (PDCT) matrix, that is
    \[
    A_i := \frac1{\sqrt{m}} \cos(2i\pi \xi),
    \]
where $\xi \in \R^m \backsim \mathcal{U}([0, 1]^m)$ are uniformly and independently sampled from $[0,1]$.
\end{enumerate}
The random Gaussian and PDCT matrices fit for comprehensive sensing, since they are incoherent and have small RIP constants with high probability \cite{BlanchardRIP2011, Candes06b}.


The ground true sparse solution $\bar{x}\in\mathbb{R}^{n}$ is generated via randomly picking $s$ of its components as nonzeros, whose entries are also randomly generated as i.i.d. Gaussian, while the remaining components are all set to be zeros. The observation matrix $b$ is generated by $b = A\bar{x} + \sigma \, \epsilon$ with $\epsilon\in\mathbb{R}^{m}$ being an i.i.d. Gaussian noise. In numerical experiments, the problem size is set as $(m, n) = (256, 1024)$, and the standard deviation of the additive Gaussian noise is set as $\sigma=0.1\%$.

In the implementation of the ITAT and the ITAC, the stepsize and the initial point are set as $v_k \equiv 0.5$ and $X^0 = 0$ as default, and the stopping criterion is set as $\frac{\|x^k-x^{k-1}\|_2}{\|x^{k-1}\|_2}\leq 10^{-6}$ or the number of iterations is greater than 500. The performance of the algorithms is evaluated by:
\begin{itemize}
  \item[-] (Accuracy) The relative error from ground true solution (RE):
  \[
  \mbox{RE}:=\frac{\|x - \bar{x}\|_2}{\|\bar{x}\|_2}.
  \]
  \item[-] (Robustness) The ratio of successful recovery, in which $\mbox{RE}<0.01$, in 500 random trails.
\end{itemize}

The first experiment aims to show the numerical performance of the ITAC with different continuation parameter $\gamma$ and the ITAT with different truncation parameter $s$. In this experiment, the sparsity of the  true solution is set as $\bar{s} = 51$. Figures \ref{fig-para}(a) and (b) plot the average RE of the solution generated by the ITAC with $\gamma$ varying from $(0.9, 1)$ and the ITAT with $s$ varying from $(45, 70)$, respectively, in 500 random trials. It is demonstrated from Figure \ref{fig-para}(a) that the ITAC performs best in obtaining an accurate estimation when $\gamma = 0.98$. It is indicated from Figure \ref{fig-para}(b) that the ITAT cannot obtain an accurate estimation when $s< \bar{s}$ and approaches an accurate estimation when $s \ge \bar{s}$ slightly (within $20\%$).
Therefore, in the following numerical experiments, we set the continuation parameter $\gamma = 0.98$ and the truncation parameter $s= \bar{s}$ in ITAC and ITAT, respectively.

\begin{figure}[ht]
\centering
\subfigure[$\gamma$ in ITAC.]{\includegraphics[width=6.5cm]{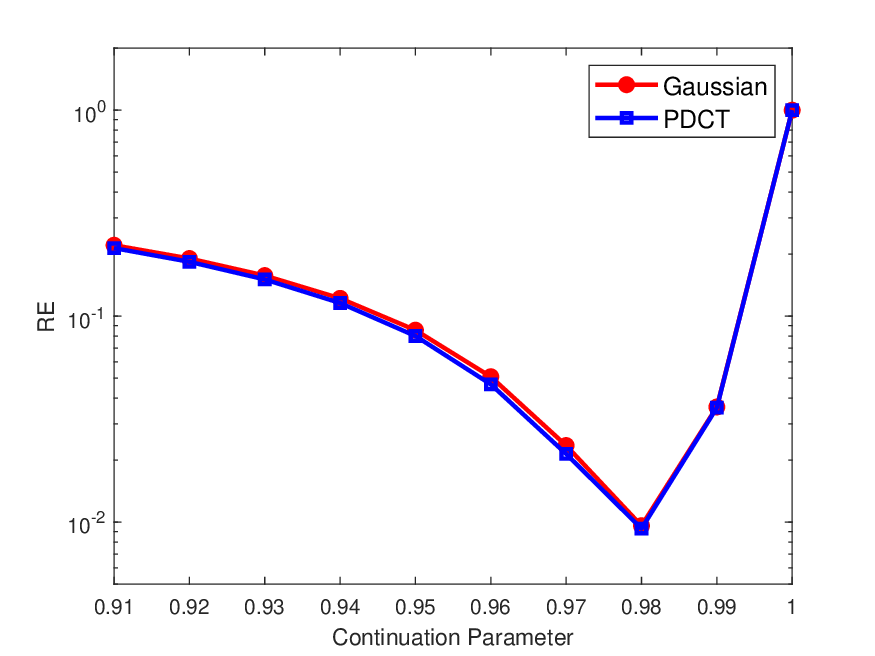}}\quad
\subfigure[$s$ in ITAT.]{\includegraphics[width=6.5cm]{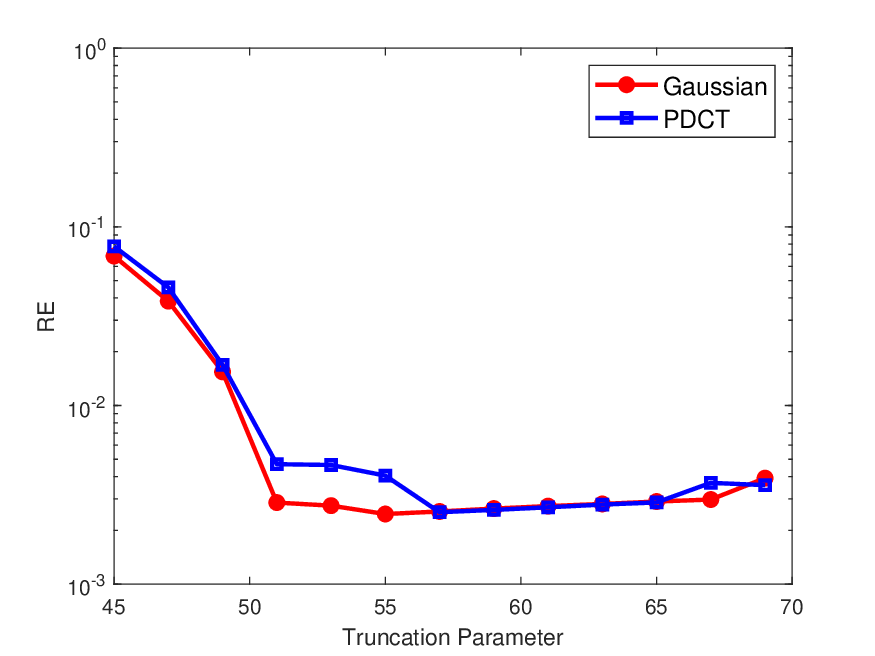}}
\caption{Numerical results of ITAC and ITAT with different parameters.}
\label{fig-para}
\end{figure}

The second experiment aims to compare the convergence behavior of the ITAC and the ITAT with the standard ITA \cite{LouYan2018} for the $\ell_{1\text{-}2}$ regularization and the ISTA \cite{Daubechies04} for Lasso. In this experiment, the sparsity ratio of the true solution is set as $s/n := 5\%$. Figure \ref{fig-converge} plots the average RE of these algorithms in 500 random trials along the number of iterations for Gaussian and PDCT matrices, respectively. It is displayed from Figure \ref{fig-converge} that the ITAC and the ITAT converge faster and achieve a more accurate solution than the standard ITA and the ISTA. This validates the accelerating capability and the convergence to an approximate true solution of the continuation technique and the truncation technique in PGA for sparse optimization, as well as the advantage of the $\ell_{1\text{-}2}$ regularization over Lasso.

\begin{figure}[ht]
\centering
\mbox{
\subfigure[Gaussian]{\includegraphics[width=6.5cm]{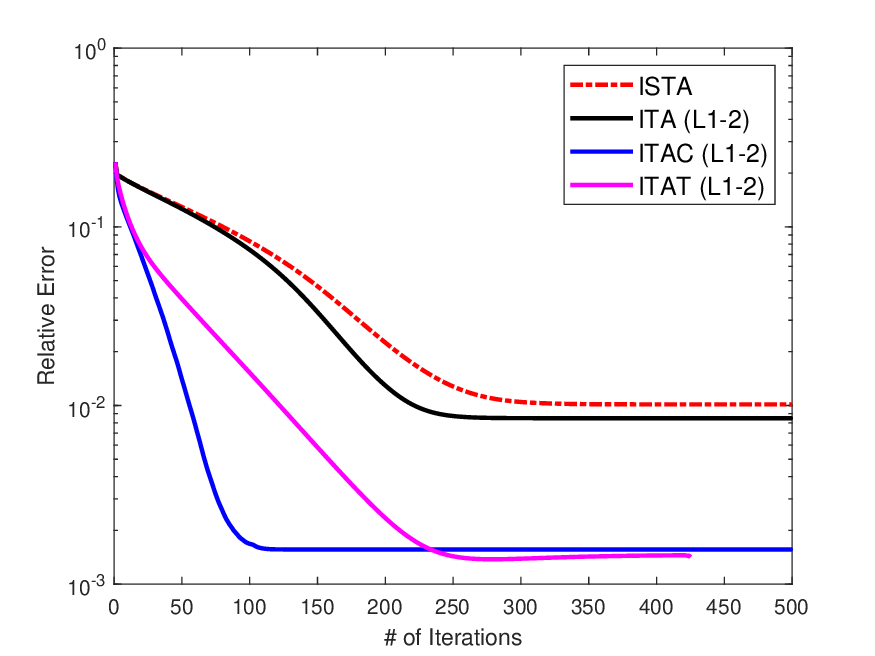}}\quad
\subfigure[PDCT]{\includegraphics[width=6.5cm]{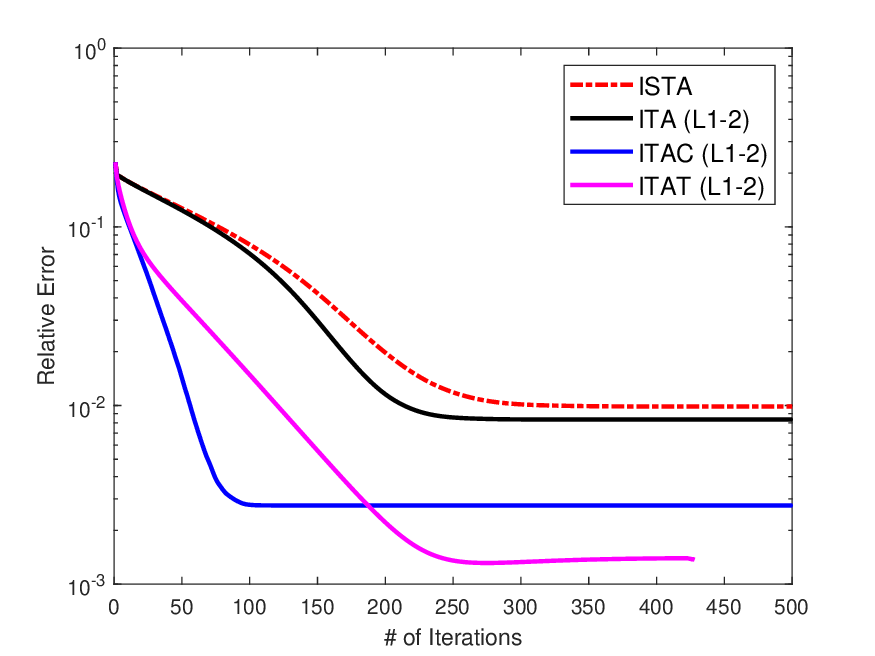}}}
\caption{Convergence behavior of ITAs for sparse optimization.}
\label{fig-converge}
\end{figure}

The third experiment aims to compare the stability of the ITAC and the ITAT with the standard ITA \cite{LouYan2018} and the DCA \cite{YinDL2015} for the $\ell_{1\text{-}2}$ regularization, and the popular algorithms for Lasso: the ISAT \cite{Daubechies04}, the ADMM \cite{YZ11} and the spectral projected gradient (SPGL1) \cite{BergFriedlander2008}. Figure \ref{fig-robust} plots the successful recovery rates and Table \ref{tab-CPU} records the (averaged) CPU time (in seconds) of these algorithms within 500 random trials at each sparsity level for Gaussian and PDCT matrices, respectively. It is indicated from Figure \ref{fig-robust} that the ITAC and the ITAT can achieve a higher successful recovery rate than the standard ITA, as well as the popular algorithms for Lasso.
Although the DCA obtains a little higher successful recovery rate than the ITAC and the ITAT, it is shown in Table \ref{tab-CPU} that the DCA consumes much more CPU time than other algorithms, including the ITAC and the ITAT, because the DCA costs much more time in adopting the CVX software to solve the convex subproblems.

%

\begin{figure}[ht]
\centering
\mbox{
\subfigure[Gaussian]{\includegraphics[width=6.5cm]{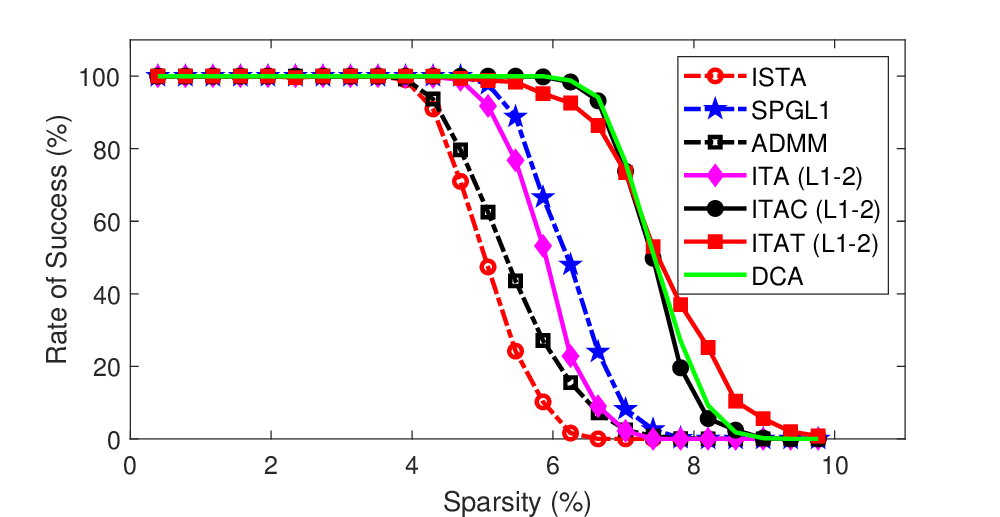}}\quad
\subfigure[PDCT]{\includegraphics[width=6.5cm]{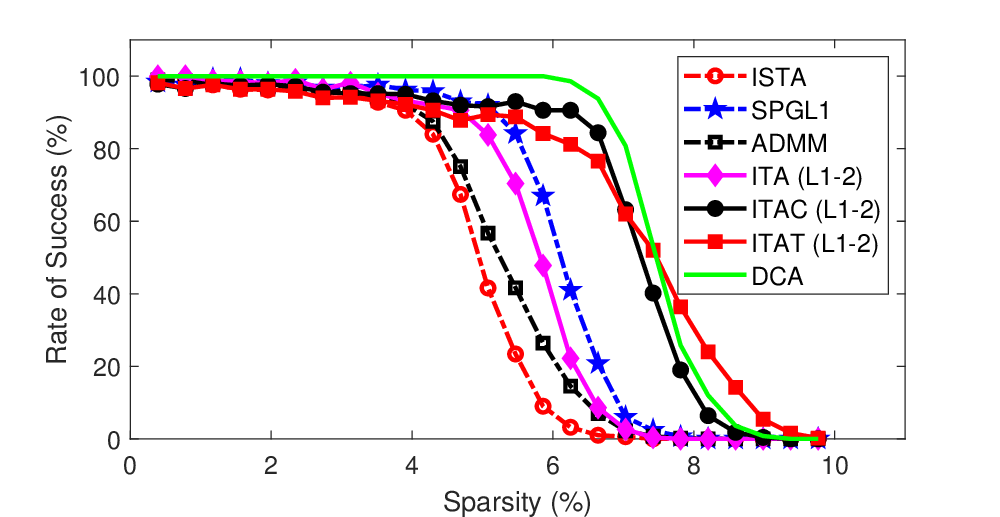}}}
\caption{Successful recovery rates of ITAs and algorithms for sparse optimization.}
\label{fig-robust}
\end{figure}

\begin{table}[h]
  \centering
  \caption{CPU time (in seconds) of ITAs and algorithms for sparse optimization.}
\begin{tabular}{c|ccc|cccc}
\hline
Problems    & \multicolumn{3}{|c|}{Lasso} &\multicolumn{4}{|c}{$\ell_{1\text{-}2}$ regularization} \\ \hline
Algorithms  & ITA       & SGPL1     & AMDD      & ITA       & ITAC      & ITAT      & DCA    \\ \hline
CPU time    & 0.34      & 0.81      &  0.07     & 0.35      & 0.28      & 0.34      & 2.34   \\ \hline
\end{tabular}
\label{tab-CPU}
\end{table}

In conclusion, preliminary numerical results show that the ITAC and the ITAT for the $\ell_{1\text{-}2}$ regularization have the strong sparsity promoting capability and outperforms the standard ITA for the $\ell_{1\text{-}2}$ regularization and popular algorithms for Lasso on both accuracy and robustness, benefiting from the $\ell_{1\text{-}2}$ penalty and the continuation or truncation technique.

\noindent \vskip 0.5cm
\begin{appendices}
\section[Appendix]{Proof of Proposition \ref{prop-REC-sufficient}}\label{App-JRFC}

The following lemma is useful to prove Proposition \ref{prop-REC-sufficient}, which is taken from \cite[Lemma 3.1]{Geer09}. 
\begin{lemma}\label{lem-RIP2}
Let $x, y\in \R^n$ and $\gamma \in (0,1)$ be such that $-\langle x,y\rangle\le \gamma\|x\|_2^2$. Then $(1-\gamma)\|x\|_2\le \|x+y\|_2$.
\end{lemma}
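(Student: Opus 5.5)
Set $u:=x+y$ and compare $\|u\|_2$ with $\|x\|_2$ through an inner product. If $x=0$ the claim $(1-\gamma)\|x\|_2\le\|x+y\|_2$ is trivial, so assume $x\neq 0$. I would lower-bound $\langle x, x+y\rangle$ and then apply Cauchy--Schwarz.

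First expand the inner product and use the hypothesis $-\langle x,y\rangle\le\gamma\|x\|_2^2$:
\[
\langle x, x+y\rangle=\|x\|_2^2+\langle x,y\rangle\ \ge\ \|x\|_2^2-\gamma\|x\|_2^2=(1-\gamma)\|x\|_2^2 .
\]
Since $\gamma\in(0,1)$, the right-hand side is nonnegative. Next, Cauchy--Schwarz gives $\langle x,x+y\rangle\le \|x\|_2\|x+y\|_2$. Combining the two bounds yields $(1-\gamma)\|x\|_2^2\le\|x\|_2\|x+y\|_2$, and dividing by $\|x\|_2>0$ gives the claim.

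There is no real obstacle here. The only points needing care are the trivial case $x=0$ and the use of $\gamma<1$, which keeps the lower bound nonnegative; this nonnegativity is also what makes the estimate useful in the subsequent proof of Proposition~\ref{prop-REC-sufficient}. In that application, $x$ will be $Ax_{\mathcal{J}}$ and $y$ will be the contribution from the tail blocks $\sum_k Ax_{\mathcal{I}_k}$, with the inner product bounded via the restricted orthogonality constant or the coherence.
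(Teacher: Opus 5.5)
Your proof is correct and complete: $(1-\gamma)\|x\|_2^2\le \|x\|_2^2+\langle x,y\rangle=\langle x,x+y\rangle\le \|x\|_2\|x+y\|_2$, followed by division by $\|x\|_2>0$, settles the claim, and the case $x=0$ is trivial as you say. The paper gives no proof of this lemma; it imports it from \cite[Lemma 3.1]{Geer09}. So there is nothing in the paper to compare against, and your Cauchy--Schwarz argument is a valid self-contained replacement for that citation.

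Two small remarks. First, the chain of inequalities never uses $\gamma<1$. For $\gamma\ge 1$ the conclusion holds trivially, so that restriction only makes the bound informative and needs no special care in the proof. Second, your closing description of the application is slightly off. In the appendix the lemma is invoked only in part (ii) of Proposition~\ref{prop-REC-sufficient}, with $y=Ax_{\mathcal{J}^c}$ controlled by the restricted orthogonality constant. Parts (iii)--(v), including the coherence-based ones, do not use the lemma; they bound $\|Ax\|_2^2$ from below directly via the expansion \eqref{eq-RIP-3}.
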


\begin{proof}[Proof of Proposition \ref{prop-REC-sufficient}]
Associated with the $\ell_{1\text{-}2}${\rm -REC}$(s,t)$, we define the feasible set
\begin{equation*}\label{eq-rec-RC-1}
C(s):=\{x\in R^n: \|x_{\mathcal{I}^c}\|_1\le \|x_{\mathcal{I}}\|_1+ \|x\|_2 \mbox{ for some } |\mathcal{I}|\le s\}.
\end{equation*}
Fix $x \in C(s)$. Then there exists $\mathcal{I}\subseteq [n]$ such that
\begin{equation}\label{eq-rec-RC-2}
|\mathcal{I}|\le s \quad \mbox{and}\quad \|x_{\mathcal{I}^c}\|_1\le \|x_{\mathcal{I}}\|_1+ \|x\|_2.
\end{equation}
Write $\mathcal{I}_k:=\mathcal{I}_k(x;t)$ as in \eqref{eq-Jk} and $\mathcal{J}:=\mathcal{I}\cup \mathcal{I}_0$ for the sake of simplicity. Hence
\begin{equation}\label{eq-rec-RC-2a}
\mbox{$\{\mathcal{J}, \mathcal{I}_1, \dots, \mathcal{I}_r\}$ is a disjoint decomposition of $[n]$ with $|\mathcal{I}_k|\le t$ and $|\mathcal{J}|\le s+t$}.
\end{equation}

(i) It follows from Lemma \ref{lem-bound-Nc} and \eqref{eq-rec-RC-2} that
\begin{equation}\label{eq-rec-RC-4}
\sum_{k=1}^r \|x_{\mathcal{I}_k}\|_2 \le \frac1{\sqrt{t}}\|x_{\mathcal{I}^c}\|_1
\le \frac1{\sqrt{t}} \|x_{\mathcal{I}}\|_1+\frac{1}{\sqrt{t}} \|x\|_2.
\end{equation}
Note by \eqref{eq-L1-2} (with $x_{\mathcal{I}}$ in place of $x$) that
\begin{equation}\label{eq-rec-RC-4a}
\|x_{\mathcal{I}}\|_1\le \sqrt{s} \|x_{\mathcal{I}}\|_2 \le \sqrt{s} \|x_{\mathcal{J}}\|_2,
\end{equation}
and by the subadditivity of $\ell_2$ norm and \eqref{eq-rec-RC-2a} that $\|x\|_2 \le \sum_{k=1}^r \|x_{\mathcal{I}_k}\|_2 + \|x_{\mathcal{J}}\|_2$.
Then \eqref{eq-rec-RC-4} is reduced to
\begin{equation}\label{eq-rec-RC-5}
(\sqrt{t}-1)\sum_{k=1}^r \|x_{\mathcal{I}_k}\|_2 \le (\sqrt{s}+1) \|x_{\mathcal{J}}\|_2.
\end{equation}
Noting by \eqref{eq-rec-RC-2a} that $|\mathcal{J}|\le s+t$ and $|\mathcal{I}_k|\le t$ for each $k\in \IN$, one has by \eqref{sparse-eigen} that
\[
\sigma_{\min}(s+t)\,\|x_{\mathcal{J}}\|_2 \le \|Ax_{\mathcal{J}}\|_2 \quad \mbox{and} \quad \|Ax_{\mathcal{I}_k}\|_2 \le \sigma_{\max}(t)\,\|x_{\mathcal{I}_k}\|_2.
\]
Due to the subadditivity of $\ell_2$ norm, this, together with \eqref{eq-rec-RC-5}, implies that
\begin{equation*}
\|Ax\|_2
\ge \|Ax_{\mathcal{J}}\|_2-\sum_{k=1}^{r}\|Ax_{\mathcal{I}_k}\|_2\\
\ge \left(\sigma_{\min}(s+t)- \frac{\sqrt{s}+1}{\sqrt{t}-1}\sigma_{\max}(t)\right)\|x_{\mathcal{J}}\|_2.
\end{equation*}
Since $x$ and $\mathcal{I}$ satisfying \eqref{eq-rec-RC-2} are arbitrary, it is shown to hold that
\[
\phi(s,t) \ge \sigma_{\min}(s+t)- \frac{\sqrt{s}+1}{\sqrt{t}-1}\sigma_{\max}(t) >0
\]
by assumption (i).

(ii) By Definition \ref{def-RIP}(ii) (cf. \eqref{eq-ROC}) and \eqref{eq-rec-RC-2a}, one has that
\[
|\langle Ax_{\mathcal{J}}, Ax_{\mathcal{J}^c}\rangle|
\le \sum_{k=1}^r|\langle Ax_{\mathcal{J}}, Ax_{\mathcal{I}_k}\rangle|\le \theta_{t,s+t} \|x_{\mathcal{J}}\|_2 \sum_{k=1}^r\|x_{\mathcal{I}_k}\|_2.
\]
Then it follows from \eqref{eq-rec-RC-5} that
\begin{equation}\label{eq-RIP-1}
|\langle Ax_{\mathcal{J}}, Ax_{\mathcal{J}^c}\rangle|
\le \theta_{t,s+t} \frac{\sqrt{s}+1}{\sqrt{t}-1}\|x_{\mathcal{J}}\|_2^2
\le \frac{ \theta_{t,s+t} (\sqrt{s}+1)}{(\sqrt{t}-1)(1-\delta_{s+t})} \|Ax_{\mathcal{J}}\|_2^2
\end{equation}
(by \eqref{eq-RIC}). One checks by assumption (ii) of this proposition that
\begin{equation}\label{eq-RIP-2}
0<\frac{\theta_{t,s+t}(\sqrt{s}+1)}{(\sqrt{t}-1)(1-\delta_{s+t})}<1.
\end{equation}
This, together with \eqref{eq-RIP-1}, shows that Lemma \ref{lem-RIP2} is applicable (with $Ax_{\mathcal{J}}$, $Ax_{\mathcal{J}^c}$, $\frac{\theta_{t,s+t}(\sqrt{s}+1)}{(\sqrt{t}-1)(1-\delta_{s+t})}$ in place of $x$, $y$, $\gamma$) to concluding that
\begin{equation*}
\|Ax\|_2
\ge \left(1-\frac{\theta_{t,s+t}(\sqrt{s}+1)}{(\sqrt{t}-1)(1-\delta_{s+t})}\right)\|Ax_{\mathcal{J}}\|_2\\
\ge \sqrt{1-\delta_{s+t}}\left(1-\frac{\theta_{t,s+t}(\sqrt{s}+1)}{(\sqrt{t}-1)(1-\delta_{s+t})}\right)\|x_{\mathcal{J}}\|_2
\end{equation*}
(due to \eqref{eq-RIC}).
Since $x$ and $I$ satisfying \eqref{eq-rec-RC-2} are arbitrary, we have by Definition \ref{def-REC} and \eqref{eq-RIP-2} that
\[
\phi(s,t)\ge \sqrt{1-\delta_{s+t}}\left(1-\frac{\theta_{t,s+t}(\sqrt{s}+1)}{(\sqrt{t}-1)(1-\delta_{s+t})}\right)>0.
\]

(iii) Note that
\begin{equation}\label{eq-RIP-3}
\|Ax\|_2^2 =\|Ax_{\mathcal{J}}\|_2^2+2\langle Ax_{\mathcal{J}}, Ax_{\mathcal{J}^c}\rangle+\|Ax_{\mathcal{J}^c}\|_2^2 \ge \|Ax_{\mathcal{J}}\|_2^2-2|\langle Ax_{\mathcal{J}}, Ax_{\mathcal{J}^c}\rangle|.
\end{equation}
In particular, we derive by the fact that $\mathcal{I} \subseteq \mathcal{J}$, \eqref{eq-rec-RC-2} and \eqref{eq-rec-RC-4a} that
\begin{equation}\label{eq-RIP-5a}
\|x_{\mathcal{J}^c}\|_1 \le \|x_{\mathcal{I}^c}\|_1 \le \|x_{\mathcal{I}}\|_1+ \|x\|_2 \le (\sqrt{s} + 1) \|x_{\mathcal{J}}\|_2+ \|x_{\mathcal{J}^c}\|_2.
\end{equation}
Noting by \eqref{eq-rec-RC-5} that
$\|x_{\mathcal{J}^c}\|_2\le \sum_{k=1}^r \|x_{\mathcal{I}_k}\|_2 \le \frac{\sqrt{s}+1}{\sqrt{t}-1} \|x_{\mathcal{J}}\|_2$,
\eqref{eq-RIP-5a} is reduced to
\begin{equation}\label{eq-RIP-5b}
\|x_{\mathcal{J}^c}\|_1 \le \frac{\sqrt{t} (\sqrt{s}+1)}{\sqrt{t}-1} \|x_{\mathcal{J}}\|_2.
\end{equation}
Moreover, one has by \eqref{eq-ROC} that
$\|A_{\mathcal{J}^c}^\top A x_{\mathcal{J}}\|_{\infty} \le \theta_{s+t,1} \|x_{\mathcal{J}}\|_2$.
By the above two inequalities, we achieve that
\begin{equation}\label{eq-RIP-5n}
|\langle Ax_{\mathcal{J}}, Ax_{\mathcal{J}^c}\rangle|\le \|x_{\mathcal{J}^c}\|_1 \|A_{\mathcal{J}^c}^\top A x_{\mathcal{J}}\|_{\infty} \le \theta_{s+t,1} \frac{\sqrt{t} (\sqrt{s}+1)}{\sqrt{t}-1} \|x_{\mathcal{J}}\|_2^2.
\end{equation}
On the other hand, we obtain by \eqref{sparse-eigen} that
\begin{equation}\label{eq-RIP-4}
\|Ax_{\mathcal{J}}\|_2^2 \ge \sigma_{\min}(s+t) \| x_{\mathcal{J}}\|_2^2.
\end{equation}
By \eqref{eq-RIP-5n} and \eqref{eq-RIP-4}, \eqref{eq-RIP-3} implies that
\[
\|Ax\|_2^2\ge \left(\sigma_{\min}(s+t) - 2\theta_{s+t,1} \frac{\sqrt{t} (\sqrt{s}+1)}{\sqrt{t}-1} \right) \| x_{\mathcal{J}}\|_2^2.
\]
Since $x$ and $\mathcal{I}$ satisfying \eqref{eq-rec-RC-2} are arbitrary, it is shown by assumption (iii) that
\[
\phi(s,t) \ge \left(\sigma_{\min}(s+t) - 2\theta_{s+t,1} \frac{\sqrt{t} (\sqrt{s}+1)}{\sqrt{t}-1} \right)^{\frac12} >0.
\]

(iv) Note by the mutual incoherence \eqref{eq-MIP} that
\begin{equation}\label{eq-MIP-2}
|\langle Ax_{\mathcal{J}}, Ax_{\mathcal{J}^c}\rangle| \le \mu \sum_{i\in \mathcal{J}^c} \sum_{j\in \mathcal{J}} |x_ix_j| \le \mu \|x_{\mathcal{J}}\|_1\|x_{\mathcal{J}^c}\|_1
\le \mu \frac{\sqrt{t(s+t)} (\sqrt{s}+1)}{\sqrt{t}-1} \|x_{\mathcal{J}}\|_2^2.
\end{equation}
(by \eqref{eq-L1-2} and \eqref{eq-RIP-5b}).
Combining this with \eqref{eq-RIP-3} and \eqref{eq-RIP-4}, we obtain that
\[
\| Ax\|_2^2\ge\left(\sigma_{\min}(s+t) - 2\mu \frac{\sqrt{t(s+t)} (\sqrt{s}+1)}{\sqrt{t}-1} \right)\| x_{\mathcal{J}}\|_2^2.
\]
Since $x$ and $\mathcal{I}$ satisfying \eqref{eq-rec-RC-2} are arbitrary, it is shown by assumption (iv) that
\[
\phi(s,t) \ge \left(\sigma_{\min}(s+t) - 2\mu \frac{\sqrt{t(s+t)} (\sqrt{s}+1)}{\sqrt{t}-1} \right)^{\frac12} >0.
\]

(v) Separating the diagonal and off-diagonal terms of $x_{\mathcal{J}}^{\top}A^{\top}Ax_{\mathcal{J}}$, we have that
\begin{equation}\label{eq-MIP-2a}
\|Ax_{\mathcal{J}}\|_2^2=\sum_{i\in \mathcal{J}} \langle A_i x_i, A_i x_i \rangle + \sum_{i,j\in \mathcal{J}: i\neq j} \langle A_i x_i, A_j x_j \rangle.
\end{equation}
By the assumption that $\|A_i\|=1$ for each $i\in [n]$, one has
$\sum_{i\in \mathcal{J}} \langle A_i x_i, A_i x_i \rangle = \|x_{\mathcal{J}}\|_2^2$.
By the mutual incoherence \eqref{eq-MIP}, we have that
\[
\sum_{i,j\in \mathcal{J}: i\neq j} \langle A_i x_i, A_j x_j \rangle \ge - \mu \sum_{i,j\in \mathcal{J}: i\neq j} |x_ix_j| \ge -\mu\|x_{\mathcal{J}}\|_1^2 \ge -(s+t)\mu \|x_{\mathcal{J}}\|_2^2.
\]
Consequently, \eqref{eq-MIP-2a} is reduced to $\|Ax_{\mathcal{J}}\|_2^2 \ge (1-(s+t)\mu)\|x_{\mathcal{J}}\|_2^2$. This, together with \eqref{eq-RIP-3} and \eqref{eq-MIP-2}, shows that
\[
\| Ax\|_2^2\ge\left(1 - (s+t)\mu - 2\mu \frac{\sqrt{t(s+t)} (\sqrt{s}+1)}{\sqrt{t}-1} \right)\| x_{\mathcal{J}}\|_2^2.
\]
Since $x$ and $\mathcal{I}$ satisfying \eqref{eq-rec-RC-2} are arbitrary, it holds that
\[
\phi(s,t) \ge \left( 1 - (s+t)\mu - 2\mu \frac{\sqrt{t(s+t)} (\sqrt{s}+1)}{\sqrt{t}-1} \right)^{\frac12} >0
\]
by assumption (v). The proof is complete.
\end{proof}
\end{appendices}

\addcontentsline{toc}{chapter}{Bibliography}
\bibliographystyle{abbrv}
\bibliography{thesis}

@article{HuMP25,
author = {Hu, Yaohua and Hu, Xinlin and Yang, Xiaoqi},
title = {On convergence of iterative thresholding algorithms to approximate sparse solution for composite nonconvex optimization},
journal = {Mathematical Programming},
volume = {211},
pages = {181--206},
year = {2025},
doi = {10.1007/s10107-024-02068-1},
}

@article{HuJMLR17,
author  = {Yaohua Hu and Chong Li and Kaiwen Meng and Jing Qin and Xiaoqi Yang},
title   = {Group Sparse Optimization via {$\ell_{p,q}$} Regularization},
journal = {Journal of Machine Learning Research},
year    = {2017},
volume  = {18},
number  = {30},
pages   = {1-52},
}

@book{FoucartRauhut2013,
  title={A Mathematical Introduction to Compressive Sensing},
  author={S. Foucart and H. Rauhut},
  year={2013},
  publisher={Springer},
  address={New York}
}

@article {Candes06b,
author = {Cand\`{e}s, Emmanuel  and Romberg, Justin K. and Tao, Terence},
title = {Stable signal recovery from incomplete and inaccurate measurements},
journal = {Communications on Pure and Applied Mathematics},
volume = {59},
number = {8},
pages = {1207-1223},
year = {2006},
}

@article{Fan01,
    author = {Fan, Jianqing and Li, Runze},
    journal = {Journal of the American Statistical Association},
    number = {456},
    pages = {1348--1360},
    title = {Variable Selection via Nonconcave Penalized Likelihood and Its Oracle Properties},
    volume = {96},
    year = {2001}
}

@article{BeckTeboulle09,
author = {Beck, A. and Teboulle, M.},
title = {A Fast Iterative Shrinkage-Thresholding Algorithm for Linear Inverse Problems},
journal = {SIAM Journal on Imaging Sciences},
volume = {2},
number = {1},
pages = {183-202},
year = {2009},
}

@article{Blumensath09,
title = "Iterative hard thresholding for compressed sensing",
journal = "Applied and Computational Harmonic Analysis",
volume = "27",
number = "3",
pages = "265-274",
year = "2009",
author = "Thomas Blumensath and Mike E. Davies",
}

@article {Daubechies04,
author = {Daubechies, I. and Defrise, M. and De Mol, C.},
title = {An iterative thresholding algorithm for linear inverse problems with a sparsity constraint},
journal = {Communications on Pure and Applied Mathematics},
volume = {57},
pages = {1413--1457},
year = {2004},
}

@article {YZ11,
    AUTHOR = {Yang, Junfeng and Zhang, Yin},
     TITLE = {Alternating direction algorithms for $\ell_1$-problems in compressive sensing},
   JOURNAL = {SIAM Journal on Scientific Computing},
    VOLUME = {33},
      YEAR = {2011},
    NUMBER = {1},
     PAGES = {250--278},
}

@article{Natarajan95,
author = {Natarajan, B.},
title = {Sparse Approximate Solutions to Linear Systems},
journal = {SIAM Journal on Computing},
volume = {24},
number = {2},
pages = {227-234},
year = {1995},
}

@ARTICLE{XuZB12,
author={Zongben Xu and Xiangyu Chang and Fengmin Xu and Hai Zhang},
journal={IEEE Transactions on Neural Networks and Learning Systems},
title={${L}_{1/2}$ Regularization: {A} Thresholding Representation Theory and a Fast Solver},
year={2012},
volume={23},
pages={1013-1027},
}

@article{ChenXJ10,
author = {Chen, X. and Xu, F. and Ye, Y.},
title = {Lower Bound Theory of Nonzero Entries in Solutions of $\ell_2$-$\ell_p$ Minimization},
journal = {SIAM Journal on Scientific Computing},
volume = {32},
number = {5},
pages = {2832-2852},
year = {2010},
}

@article{TZhang13,
author = {Xiao, L. and Zhang, T.},
title = {A Proximal-Gradient Homotopy Method for the Sparse Least-Squares Problem},
journal = {SIAM Journal on Optimization},
volume = {23},
number = {2},
pages = {1062-1091},
year = {2013},
doi = {10.1137/120869997},
}

@article{Geer09,
author = {Sara A. van de Geer and Peter B\"{u}hlmann},
title = {{On the conditions used to prove oracle results for the {L}asso}},
journal = {Electronic Journal of Statistics},
volume = {3},
year = {2009},
pages = {1360--1392},
}

@article{Bunea07,
author = {Florentina Bunea and Alexandre Tsybakov and Marten Wegkamp},
title = {{Sparsity oracle inequalities for the Lasso}},
journal = {Electronic Journal of Statistics},
volume = {1},
year = {2007},
pages = {169--194},
}

@ARTICLE{Bickel09,
    author = {Peter J. Bickel and Ya\'{a}cov Ritov and Alexandre B. Tsybakov},
    title = {Simultaneous analysis of {L}asso and {D}antzig selector},
    journal = {Annals of Statistics},
    year = {2009},
    volume = {37},
    pages = {1705--1732}
}

@article{Meinshausen09,
author = {Nicolai Meinshausen and Bin Yu},
title = {{L}asso-type recovery of sparse representations for high-dimensional data},
journal = {Annals of Statistics},
volume = {37},
year = {2009},
pages = { 246--270},
}

@article{CandesTao05,
author = {Emmanuel Cand\`{e}s and Terence Tao},
title = {Decoding by Linear Programming},
journal={IEEE Transactions on Information Theory},
volume = {51},
year = {2005},
pages = {4203--4215},
}

@article{HaleYinZhang08,
author = {Elaine T. Hale and Wotao Yin and Yin Zhang},
title = {Fixed-Point Continuation for {$\ell_1$}-Minimization: {M}ethodology and Convergence},
journal = {SIAM Journal on Optimization},
volume = {19},
number = {3},
pages = {1107--1130},
year = {2008},
doi = {10.1137/070698920},
}

@article{NeedellCoSaMP2009,
title = "{CoSaMP}: {I}terative signal recovery from incomplete and inaccurate samples",
journal = "Applied and Computational Harmonic Analysis",
volume = "26",
number = "3",
pages = "301--321",
year = "2009",
author = "D. Needell and J.A. Tropp",
}

@Article{Wright2015,
author="Wright, Stephen J.",
title="Coordinate descent algorithms",
journal="Mathematical Programming",
year="2015",
volume="151",
number="1",
pages="3--34",
issn="1436-4646",
doi="10.1007/s10107-015-0892-3",
}

@article{LouYan2018,
  title={Fast {L1-L2} Minimization via a Proximal Operator},
  author={Yifei Lou and Ming Yan},
  journal={Journal of Scientific Computing},
  volume={74},
  pages={767--785},
  year={2018},
}

@article{Donoho2006Most,
  title={For most large underdetermined systems of linear equations the minimal $\ell_1$-norm solution is also the sparsest solution},
  author={Donoho, D. L.},
  journal={Communications on Pure and Applied Mathematics},
  volume={59},
  number={6},
  pages={797--829},
  year={2006},
  publisher={Wiley Online Library}
}

@article{Donoho2001Uncertainty,
  title={Uncertainty principles and ideal atomic decomposition},
  author={Donoho, D. L and Huo, X. M.},
  journal={{IEEE} Transactions on Information Theory},
  volume={47},
  number={7},
  pages={2845--2862},
  year={2001}
}

@article{BlanchardRIP2011,
author = {Blanchard, Jeffrey D. and Cartis, Coralia and Tanner, Jared},
title = {Compressed Sensing: How Sharp Is the Restricted Isometry Property?},
journal = {SIAM Review},
volume = {53},
number = {1},
pages = {105-125},
year = {2011},
}

@article{Yin08,
author = {Yin, W. and Osher, S. and Goldfarb, D.},
title = {Bregman iterative algorithms for {$\ell_1$}-minimization with application to compressed sensing},
journal = {SIAM Journal on Imaging Sciences},
volume = {1},
number = {1},
pages = {143--168},
year = {2008},
}

@article{WenYin2010,
author = {Wen, Zaiwen and Yin, Wotao and Goldfarb, Donald and Zhang, Yin},
title = {A Fast Algorithm for Sparse Reconstruction Based on Shrinkage, Subspace Optimization, and Continuation},
journal = {SIAM Journal on Scientific Computing},
volume = {32},
number = {4},
pages = {1832--1857},
year = {2010},
doi = {10.1137/090747695},
}

@article{Yulingjiao17,
author = {Yuling Jiao and Bangti Jin and Xiliang Lu},
title = {Iterative Soft/Hard Thresholding with Homotopy Continuation for Sparse Recovery},
journal = {IEEE Signal Processing Letters},
volume = {24},
number = {6},
pages = {784--788},
year = {2017},
}

@article{ZhaoSIAM2020,
author = {Zhao, Yun-Bin},
title = {Optimal {$k$}-Thresholding Algorithms for Sparse Optimization Problems},
journal = {SIAM Journal on Optimization},
volume = {30},
number = {1},
pages = {31-55},
year = {2020},
doi = {10.1137/18M1219187},
}

@article{ZhangNSP2021,
  author={Zhang, Jing and Zhang, Shuguang},
  journal={IEEE Signal Processing Letters},
  title={Null Space Property of $\ell_{1-2}$ Minimization With Prior Support Information},
  year={2021},
  volume={28},
  number={},
  pages={1779--1783},
  doi={10.1109/LSP.2021.3106809}
}

@article{YinDL2015,
author = {Yin, Penghang and Lou, Yifei and He, Qi and Xin, Jack},
title = {Minimization of $\ell_{1-2}$ for Compressed Sensing},
journal = {SIAM Journal on Scientific Computing},
volume = {37},
number = {1},
pages = {A536-A563},
year = {2015},
doi = {10.1137/140952363},
}

@article{Esser2013,
author = {Esser, E. and Lou, Y. and Xin, J.},
title = {A Method for Finding Structured Sparse Solutions to Nonnegative Least Squares Problems with Applications},
journal = {SIAM Journal on Imaging Sciences},
volume = {6},
number = {4},
pages = {2010-2046},
year = {2013},
doi = {10.1137/13090540X},
}

@article{ZhangMCP2010,
  title={Nearly unbiased variable selection under minimax concave penalty},
  author={Zhang, C.-H.},
  journal={Annals of Statistics},
  volume={38},
  number={2},
  pages={894--942},
  year={2010},
}

@article{LouJSC2015,
author = {Lou, Y. and Yin, P. and He, Q and Xin, Jack},
title = {Computing Sparse Representation in a Highly Coherent Dictionary Based on Difference of $L_1$ and $L_2$},
journal = {Journal of Scientific Computing },
volume = {64},
pages = {178--196},
year = {2015},
doi = {10.1007/s10915-014-9930-1},
}

@article{GeNSP2018,
  author={Ge, Huanmin and Wen, Jinming and Chen, Wengu},
  journal={IEEE Signal Processing Letters},
  title={The Null Space Property of the Truncated $\ell_{1-2}$-Minimization},
  year={2018},
  volume={25},
  number={8},
  pages={1261--1265},
  doi={10.1109/LSP.2018.2852138}
}

@article{LouJSC2016,
  author={Yifei Lou and Penghang Yin and Jack Xin},
  journal={Journal of Scientific Computing},
  title={Point Source Super-resolution Via Non-convex $L_1$ Based Methods},
  year={2016},
  volume={68},
  pages={1082-1100},
}

@article{LouSIAM2015,
author = {Lou, Yifei and Zeng, Tieyong and Osher, Stanley and Xin, Jack},
title = {A Weighted Difference of Anisotropic and Isotropic Total Variation Model for Image Processing},
journal = {SIAM Journal on Imaging Sciences},
volume = {8},
number = {3},
pages = {1798-1823},
year = {2015},
doi = {10.1137/14098435X},
}

@article{MaLouSIAM2017,
author = {Ma, Tian-Hui and Lou, Yifei and Huang, Ting-Zhu},
title = {Truncated $l_{1-2}$ Models for Sparse Recovery and Rank Minimization},
journal = {SIAM Journal on Imaging Sciences},
volume = {10},
number = {3},
pages = {1346-1380},
year = {2017},
doi = {10.1137/16M1098929},
}

@article{BergFriedlander2008,
  Author = {E. van den Berg and M. P. Friedlander},
  Title = {Probing the Pareto frontier for basis pursuit solutions},
  year = {2008},
  journal = {SIAM Journal on Scientific Computing},
  volume = {31},
  number = {2},
  pages = {890--912},
}

@article{ShenLi2018,
  Author = {Jie Shen and Ping Li},
  Title = {A tight bound of hard thresholding},
  year = {2018},
  journal = {Journal of Machine Learning Research},
  volume = {18},
  pages = {1--42},
}
\end{document}